\chardef\bslash=`\\
\theoremstyle{plain}
   \newaliascnt{secali}{section}\newtheorem{thm}{Theorem}[secali]    \aliascntresetthe{secali}  
   \newtheorem*{thm*}{Theorem}
   \newaliascnt{cor}{thm}     \newtheorem{cor}[cor]{Corollary}       \aliascntresetthe{cor}     
   \newtheorem*{cor*}{Corollary}
   \newaliascnt{lem}{thm}     \newtheorem{lem}[lem]{Lemma}           \aliascntresetthe{lem}     
   \newcommand*{\theoremprime}{Theorem$^{\text{a}}$}
   \newtheorem*{thmprime}{\theoremprime}
      \newtheorem*{Bousfield-lem}{Bousfield's Lemma}
   \newtheorem*{Hodgkin-thm}{Hodgkin's Theorem}
   \newtheorem*{Steinberg-thm}{Steinberg's Theorem}
   \newtheorem*{obs*}{Observation}
   \newtheorem*{lem*}{Lemma}
   \newaliascnt{prop}{thm}    \newtheorem{prop}[prop]{Proposition}   \aliascntresetthe{prop}    
\theoremstyle{definition}
   \newaliascnt{defn}{thm}        \aliascntresetthe{defn}    
   \newtheorem*{defn*}{Definition}
\theoremstyle{remark}
   \newaliascnt{example}{thm} \newtheorem{example}[example]{Example} \aliascntresetthe{example} 
   \newtheorem*{example*}{Example}
   \newaliascnt{rem}{thm}     \newtheorem{rem}[rem]{Remark}          \aliascntresetthe{rem}     
   \newtheorem*{rem*}{Remark}
\numberwithin{equation}{section}
\newcommand{\Z}{\mathbb{Z}}
\newcommand{\R}{\mathbb{R}}
\newcommand{\C}{\mathbb{C}}
\renewcommand{\H}{\mathbb{H}}
\DeclareMathOperator{\id}{id}
\DeclareMathOperator{\rank}{rank}
\renewcommand{\rank}{\mathrm{rk}}
\renewcommand{\bar}{\overline}
\renewcommand{\tilde}{\widetilde}
\newcommand{\odd}{\text{odd}}
\newcommand{\cat}[1]{\mathcal{#1}}
\DeclareMathOperator{\im}{im}
\newcommand{\under}{\ensuremath{\backslash}}
\newcommand{\leftexp}[2]{{\vphantom{#2}}^{#1}{#2}}
\DeclareMathOperator{\point}{\text{point}}
\newcommand{\factor}[2]{\left.\raisebox{.1em}{\ensuremath{#1}}\middle/\raisebox{-.1em}{\ensuremath{#2}}\right.}
\newcommand{\mm}[1]{\left(\begin{smallmatrix}#1\end{smallmatrix}\right)}
\newenvironment{smalldiagram}{\begin{equation*}\SelectTips{cm}{10}}{\end{equation*}}
\newcolumntype{C}{>{$}c<{$}}
\newcolumntype{L}{>{$}l<{$}}
\DeclareMathOperator{\Kgroup}{K}
        \newcommand{\K}{\ensuremath{\Kgroup}}
\DeclareMathOperator{\KOgroup}{KO}
        \newcommand{\KO}{\ensuremath{\KOgroup}}
\DeclareMathOperator{\Wgroup}{W}
        \newcommand{\W}{\Wgroup}
        \DeclareMathOperator{\GWgroup}{GW}
        \newcommand{\GW}{\GWgroup}
\newcommand{\vb}[1]{{\mathcal{#1}}}
\newcommand{\dual}{\vee}
\renewcommand{\star}{\ensuremath{\ast}}
\newcommand{\ideal}[1]{\mathfrak{#1}}
\newcommand{\RepC}{\mathrm{R}}                        
\newcommand{\RepR}{\mathrm{RO}}                       
\newcommand{\RepH}{\mathrm{RSp}}                      
\newcommand{\RepRH}{\mathrm{RO}^{0,4}}                
\newcommand{\aC}{\alpha}                              
\newcommand{\baraC}{\bar\alpha}
\newcommand{\aR}{\alpha_{\mathrm{O}}}                 
\newcommand{\aRH}{\alpha_{\mathrm{O}}^{0,4}}          
\newcommand{\aO}{\alpha_{\mathrm{O}}}
\newcommand{\baraO}{\bar\alpha_{\mathrm O}}
\newcommand{\baraRH}{\bar\alpha_{\mathrm O}^{0,4}}
\newcommand{\keridealC}[1]{\ideal a(#1)}              
\newcommand{\keridealR}[1]{\ideal a_{\mathrm O}^0(#1)}
\newcommand{\keridealH}[1]{\ideal a_{\mathrm O}^4(#1)}
\newcommand{\keridealO}{\ideal a_{\mathrm{O}}}
\newcommand{\keridealRH}[1]{\ideal a_{\mathrm{O}}^{0,4}(#1)}
\newcommand{\reduced}[1]{\widetilde{#1}}
\newcommand{\deffont}[1]{\textbf{#1}}
\newcommand{\GrothendieckWitt}{Grothen\-dieck-Witt }
\newcommand{\ie}{\mbox{i.\thinspace{}e.\ }}
\newcommand{\cf}{\mbox{c.\thinspace{}f.\ }}
\begin{document}
\title{KO-Rings of Full Flag Varieties%
\footnote{First published in Trans.\ Amer.\ Math.\ Soc.\ {\bf 367} (2015), 2997--3016, published by the American Mathematical Society}
}
\author{Marcus Zibrowius%
 \thanks{Bergische Universit{\"a}t Wuppertal,
  Gau{\ss}stra{\ss}e 20,
  42119 Wuppertal,
  Germany}
}
\vspace{-2cm}
\maketitle
\begin{abstract}%
\noindent We present type-independent computations of the \(\KO\)-groups of full flag varieties, \ie of quotient spaces \(G/T\) of compact Lie groups by their maximal tori. Our main tool is the identification of the Witt ring, a quotient of the \(\KO\)-ring, of these varieties with the Tate cohomology of their complex \(\K\)-ring. The computations show that the Witt ring is an exterior algebra whose generators are determined by representations of \(G\).
\end{abstract}
\tableofcontents
\setlength{\parindent}{0pt}
\addtolength{\parskip}{3pt}
\addtolength{\topsep}{3pt}
\thispagestyle{empty}

\section*{Introduction}
The complex \(\K\)-ring of a homogeneous space \(X\) of the form \(G/H\), where \(G\) is a compact Lie group and \(H\) is a subgroup of maximal rank, has a classical description in terms of the representation rings of \(G\) and \(H\). In particular, every stable isomorphism class of complex vector bundles over \(X\) lies in the image of a natural ring homomorphism
\[
   \aC\colon\RepC(H)\rightarrow\K^0(X)
\]
that was already studied by Atiyah and Hirzebruch in their seminal paper \cite{AtiyahHirzebruch:VectorBundles}.

\begin{samepage}
For real vector bundles, the situation is far more subtle. In particular, while it is straight-forward to define an analogous ring homomorphism
\[
   \aR\colon\RepR(H)\rightarrow\KO^0(X),
\]
it is easy to see that it is almost never surjective (\autoref{prop:coker-aO}).
\end{samepage}

On the whole, our current understanding of the \(\KO\)-groups of homogeneous spaces is still rather patchy. The problem of computing these groups is just as old as the corresponding problem for complex \(\K\)-groups, yet no general solution is known to date. More recent progress includes a series of papers of Kishimoto, Kono, Ohsita and Yagita computing the \(\KO\)-groups of all full flag varieties, \ie of the homogeneous spaces of the form \(G/T\), where \(G\) is as above and \(T\) is a maximal torus \citelist{\cite{KKO:Flags}\cite{KO:ExceptionalFlags}\cite{Yagita:W-of-G}}. As in earlier work \citelist{\cite{Fujii:P},\cite{KonoHara:Gr},\cite{KonoHara:HSS}}, the main strategy is to show that the Atiyah-Hirzebruch spectral sequence collapses after the second differential, allowing a computation of the two-torsion of the \(\KO\)-groups via the second Steenrod square.
However, the arguments needed to put this strategy into practice are intricate.
In particular, so far it has been possible to establish that the spectral sequence collapses only on a case-by-case basis,
and the arguments used for the Lie groups of classical types in \cite{KKO:Flags}, for $E_6$, $F_4$ and $G_2$ in \cite{KO:ExceptionalFlags} and for $E_7$ and $E_8$ in \cite{Yagita:W-of-G} each have different flavour. No relationship between vector bundles on \(G/T\) and representations of \(G\) is apparent from these calculations.

In this paper, we present a more conceptual, type-independent computation of the \(\KO\)-groups of all full flag varieties based on the known description of their \(\K\)-rings. Our approach focuses on the ``Witt ring'' of \(X\), for which we give the following ad-hoc definition: Let \(r_i\colon\K^{2i}(X)\rightarrow\KO^{2i}(X)\) denote the realification maps. We define the \deffont{Witt groups} and the (total) \deffont{Witt ring} of \(X\) as the cokernels of these maps and their direct sum, respectively:
\begin{align*}
  \W^i(X) &:= \KO^{2i}(X)/r_i \\
  \W^*(X) &:= \bigoplus_{i\in\Z/4} \KO^{2i}(X)/r_i
\end{align*}
We will seek to justify our terminology at the end of this introduction. First, let us mention the key points that make these quotients interesting:
\begin{itemize}
\item The Witt groups capture the two-torsion of the \(\KO\)-groups of \(X\). In particular, since the free parts of the \(\KO\)-groups may easily be determined by cell-counting, a description of the Witt groups leads to a full additive description of \(\KO^*(X)\) (\autoref{lem:KO-structure}).
\item As a quotient ring of \(\KO^{\text{even}}(X)\), the Witt ring \(\W^*(X)\) provides a first approximation to the ring structure of \(\KO^*(X)\). In fact, it completely describes the products \(\KO^{\text{odd}}(X)\otimes\KO^{\text{odd}}(X)\rightarrow\KO^{\text{even}}(X)\) (\autoref{rem:KO-ring-structure}).
\item The ring \( \W^0(X) \) detects all non-homogeneous vector bundles: elements of \(\KO^0(X)\) not contained in the image of \(\aR\). More precisely, the cokernel of \(\aR\) may be identified with a quotient of \(\W^0(X)\) (\autoref{prop:coker-aO}).
\item The Witt ring is computable: it follows from an observation of Bousfield that \(\W^*(X)\) may be identified with the Tate cohomology of the complex \(\K\)-ring \(\K^0(X)\) (see \autoref{sec:Witt-is-Tate}).
\end{itemize}

Theorem~6.7 of \cite{Yagita:W-of-G}, summarizing the existing calculations for full flag varieties, shows that their Witt rings are exterior algebras on generators of odd degrees; in each case, the explicit number of generators in each degree may be extracted from one of the papers mentioned. The independent computations presented here yield the following concise formulation:
\begin{thm*}
Let \( G \) be a simply-connected compact Lie group.
The Witt ring of \( G/T \) is an exterior algebra on \( b_\H \) generators of degree \( 1 \) and \(\frac{b_\C}{2} + b_\R \) generators of degree \( 3 \), where \(b_\C\), \(b_\R\) and \(b_\H\) denote the numbers of basic representations of \(G\) of complex, real and quaternionic type, respectively.
\end{thm*}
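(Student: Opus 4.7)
Following the roadmap laid out in the introduction, I would reduce the theorem to a Tate-cohomology calculation via the Bousfield identification
\[
  \W^*(G/T) \;\cong\; \hat{H}^*(\Z/2;\K^0(G/T)),
\]
with $\Z/2$ acting by complex conjugation of bundles (equivalently, by duality on representations), and the four Witt groups $\W^0,\dots,\W^3$ corresponding to appropriately Bott-shifted pieces of Tate cohomology. This converts the problem into homological algebra on the $\K$-ring of $G/T$.

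The first ingredient is the classical description of $\K^0(G/T)$: by the Hodgkin/Pittie-Steinberg theorem, for simply connected $G$ one has $\RepC(G) \cong \Z[\rho_1,\dots,\rho_n]$ polynomial on the basic representations $\rho_i$, $\RepC(T)$ is free as a module over $\RepC(G)$, and $\K^0(G/T) \cong \RepC(T) \otimes_{\RepC(G)} \Z$. Partition the $\rho_i$ by type: $b_\R$ real, $b_\H$ quaternionic (both self-conjugate under duality) and $b_\C$ complex, coming in $b_\C/2$ conjugate pairs $\{\rho,\bar\rho\}$ swapped by the action.

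Writing $G$ as a product of simple factors, the basic representations partition accordingly and $\K^0(G/T)$ decomposes as a tensor product over $\Z$. A Künneth-type argument for Tate cohomology then reduces the problem to understanding the contribution of each basic representation individually. Each swapped pair of complex generators should contribute a single odd-degree exterior factor by a standard computation with free $\Z/2$-actions, while each self-conjugate generator should contribute one exterior generator too, whose precise degree ($1$ or $3$) is controlled by the sign in $J^2 = \pm 1$ of the structure isomorphism $J\colon\rho\xrightarrow{\sim}\bar\rho$ witnessing self-conjugacy. Comparing total counts then yields the asserted number of exterior generators.

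The delicate point, and what I expect to be the main obstacle, is exactly this last step: algebraically in $\RepC(G)$ a real and a quaternionic basic representation are indistinguishable, since both are simply fixed by duality, so the distinction between degrees $1$ and $3$ has to enter from the $\KO$-side and be transported through the Bott-periodicity shifts implicit in the identification of $\W^*$ with Tate cohomology. Showing cleanly that $J^2 = -1$ produces a degree-$1$ generator while $J^2 = +1$ produces a degree-$3$ one --- and in particular that no further relations appear in the exterior algebra after passage to the Pittie-Steinberg quotient --- is where I would expect to spend most of the effort.
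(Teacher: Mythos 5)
Your overall roadmap agrees with the paper: identify $\W^*(G/T)$ with Tate cohomology via Bousfield's Lemma, replace $\K^0(G/T)$ by $\RepC(T)/\keridealC{G}$ via Hodgkin's Theorem, and then do homological algebra using the Pittie--Steinberg freeness statement. The divergence, and the genuine gaps, lie in the Tate-cohomology computation itself.

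The decomposition into simple factors together with a ``K\"unneth-type argument'' does not actually reduce the problem: a single simple factor already contributes several basic representations of mixed types, so you still face a polynomial ring on many generators; moreover Tate cohomology of $\Z/2$-modules is not monoidal, so even the factor-by-factor reduction needs justification. The paper instead peels off the basic representations one at a time by a Koszul-type induction (\autoref{prop:h-principal-ideal}, \autoref{cor:h-regular-sequence-additive}): Steinberg's Theorem makes the reduced basic generators $\reduced\lambda,\reduced\lambda^*,\dots,\reduced\mu,\dots$ a regular sequence in $\RepC(T)$, and each step splits off one exterior generator of $h^*$. This avoids any K\"unneth issue.

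You also flag but do not close the two points where the substance lives. (a) That no further relations appear, i.e.\ that the generators square to zero, is established in \autoref{lem:h-squares}: for a representative $u$ of rank zero, $uu^*$ is self-dual of rank zero, hence already null in the trivial group $h^+(\RepC(T))$, so $\bar u^2 = \overline{uu^*}=0$ in the quotient. Without this, your exterior algebra is only an upper bound. (b) The $\Z/4$-grading is not read off from $J^2=\pm1$ directly but from the compatibility of the factorized map $\baraO$ with the realification maps (diagram~\eqref{eq:ra-is-ar}), giving $r_0(\baraC u_\lambda)=r_0(\baraC v_\mu)=0$ and $r_2(\baraC w_\nu)=0$ and hence degrees $3,3,1$. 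Your $J^2$-heuristic explains why $v_\mu$ pairs with $r_0$ and $w_\nu$ with $r_2$, but it does not by itself locate the generators in the $\Z/4$-grading, and you do not say what degree the $b_\C/2$ complex pairs contribute (they all land in degree $3$, via $\phi\reduced\lambda$ and $r_0$).
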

This is \autoref{thm:main} below. Precise definitions of \(b_\C, b_\R\) and \(b_\H\) are given in \autoref{sec:reps}. Their values for simple \(G\), taken from \cite{Davis:Rep}*{Table~3.1}, are displayed in \autoref{table:bbb}.

\begin{table}
\begin{tabular*}{\textwidth}{@{\extracolsep{\fill}}CCLCCCLC}
\toprule
& \text{Type}    & \quad G            & b_\C    & b_\R          & b_\H          &  & \\
\midrule
& A_n            &\mathrm{SU}(n+1)    & n-1     & -             & 1             & \text{if } n\equiv 1 \mod 4 \\
& {}^{(n\geq 1)} &                    & n-1     & 1             & -             & \text{if } n\equiv 3 \mod 4 \\
&                &                    & n       & -             & -             & \text{if } n\text{ is even} \\
\rule{0pt}{1.5em}
& B_n            &\mathrm{Spin}(2n+1) & -       & n-1           & 1             & \text{if } n\equiv 1,2 \mod 4\\
& {}^{(n\geq 2)} &                    & -       & n             & -             & \text{if } n\equiv 0,3 \mod 4\\
\rule{0pt}{1.5em}
& C_n            &\mathrm{Sp}(n)      & -       & (n-1)/2       & (n+1)/2       & \text{if } n\text{ is odd}  \\
& {}^{(n\geq 3)} &                    & -       & n/2           & n/2           & \text{if } n\text{ is even} \\
\rule{0pt}{1.5em}
& D_n            &\mathrm{Spin}(2n)   & -       & n             & -             & \text{if } n\equiv 0 \mod 4\\
& {}^{(n\geq 4)} &\hfill              & -       & n-2           & 2             & \text{if } n\equiv 2 \mod 4\\
&                &                    & 2       & n-2           & -             & \text{if } n\text{ is odd} \\
\rule{0pt}{1.5em}
& E_6            & \quad (E_6)        & 4       & 2             & -             & \\
& E_7            & \quad (E_7)        & -       & 4             & 3             & \\
& E_8            & \quad (E_8)        & -       & 8             & -             & \\
\rule{0pt}{1.5em}
& F_4            & \quad (F_4)        & -       & 4             & -             & \\
\rule{0pt}{1.5em}
& G_2            & \quad  (G_2)       & -       & 2             & -             & \\
\bottomrule
\end{tabular*}
\caption{\protect\centering The numbers of basic representations for the simply-connected simple compact Lie groups \(G\) of different types}\label{table:bbb}
\end{table}

\newcommand{\Walg}{\W}
Our justification for the above terminology is based on the following connection with algebraic geometry.
In \cite{Balmer:TWGI}, Balmer constructs a four-periodic cohomology theory \(\Walg^*\) on algebraic varieties such that, when \(F\) is a field of characteristic not two, \(\Walg^0(\text{Spec}(F))\) agrees with the classical Witt group of \(F\). In \cite{Me:WCCV}, we show that for complex flag varieties Balmer's Witt ring agrees with the ``topological'' Witt ring defined above (see also \autoref{rem:Witt}).
In particular, the theorem immediately translates into a description of Balmer's Witt ring of a full complex flag variety.

In fact, more is true.  We have set up the computations presented here in such a way that they can be done directly in the algebraic setting.   This allows us to generalize the theorem to full flag varieties over any algebraically closed field of characteristic not two:
\begin{thmprime}
Let \(G\) be a simply-connected semi-simple algebraic group over an algebraically closed field of characteristic not two, and let \(B\subset G\) be a Borel subgroup. The graded Witt ring (in the sense of Balmer) \(\W^*(G/B)\) is an exterior algebra on \( b_\H \) generators of degree \( 1 \) and \(\frac{b_\C}{2} + b_\R \) generators of degree \( 3 \), where \(b_\C\), \(b_\R\) and \(b_\H\) are the numbers of non-self-dual, symmetric and anti-symmetric basic representations of \(G\).
\end{thmprime}
In this generality, we believe the result to be new.
Complete algebraic computations of Witt groups of flag varieties are in fact still only known in special cases, for example for projective spaces \cite{Walter:PB}, (split) quadrics \cite{Nenashev:Q} and Grassmannians \cite{BalmerCalmes:Gr}.

That said, our main focus in this article will be on the topological situation.  In particular, some auxiliary results needed for the proof of {\theoremprime} in Section~\ref{sec:W-of-G/T} will simply be quoted from our article \cite{Me:Vanishing},  in which we apply similar ideas to ``twisted'' Witt groups of general flag varieties.

\paragraph{Notation and conventions.} Throughout this paper, a (topological) \deffont{space} will be a space of the homotopy type of a finite CW complex.  We do not deviate from the standard convention that \deffont{simply-connected} spaces are in particular path-connected when referring to Lie groups.  The \(\K\)- and \(\KO\)-groups of a space will be thought of as graded by \(\Z/2\) and \(\Z/8\), respectively. In particular, periodicity isomorphisms are never mentioned explicitly. The notation
\[\KO^{0,4}(X)\]
will indicate the direct sum \(\KO^0(X)\oplus\KO^4(X)\), and similarly for other graded groups and indexes.

\section{Witt rings and Tate cohomology}\label{sec:W-and-Tate}
\subsection{Witt rings capture two-torsion}\label{sec:W-captures-2}
In this section, we summarize a few lemmas concerning the additive structure of the \(\KO\)-groups of flag varieties on which all existing computations rely. To be specific, we use the term \deffont{flag variety} to refer to a homogeneous space of the form
\[
   G/H,
\]
where \( G\) is a semi-simple compact Lie group and \( H\) is a centralizer of a torus. In particular, \(H\) is always connected and of maximal rank, \ie it contains a maximal torus \(T\) of \(G\). In the special case when \(H=T\), we speak of the \deffont{full flag variety} \(G/T\). We may and will always assume that \(G\) is simply-connected.

The thus defined flag varieties are indeed complex varieties. In fact, they may be realized algebraically as quotients
\[
  G_\C/P,
\]
where \(G_\C\) is the semi-simple algebraic group over \(\C\) corresponding to \(G\) and \(P\) is a parabolic subgroup \cite{Serre:G/P}*{Th^^e9or^^e8me~2}.
Then the Bruhat decomposition of \(G_\C\) yields a cellular decomposition, in the algebraic sense, of \(G_\C/P\).
In particular, flag varieties may be given the structure of CW complexes with cells only in even dimensions. This immediately implies:
\begin{lem}\label{lem:K-structure}
  The \(\K\)-groups of a flag variety \(X\) are given by
  \begin{align*}
    \K^0(X) &= \Z^{\oplus n}\\
    \K^1(X) &= 0
  \end{align*}
 where \(n\) is the number of cells of \(X\).
\end{lem}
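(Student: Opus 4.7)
My plan is to apply the Atiyah-Hirzebruch spectral sequence (AHSS) for complex \(\K\)-theory, whose \(E_2\)-page reads
\[
   E_2^{p,q} = H^p(X;\K^q(\point)) \Rightarrow \K^{p+q}(X).
\]
By Bott periodicity, \(\K^q(\point)\) vanishes for odd \(q\) and equals \(\Z\) for even \(q\).  Combined with the hypothesis that \(X\) admits a CW structure with cells only in even real dimensions, this forces \(E_2^{p,q}\) to be concentrated in bidegrees where both \(p\) and \(q\) are even. Moreover, in such even bidegrees \(H^p(X;\Z)\) is free abelian, with total rank equal to the number \(n\) of cells of \(X\).

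Next I would observe that the \(E_2\)-page already equals the \(E_\infty\)-page. Indeed, the differential \(d_r\colon E_r^{p,q}\to E_r^{p+r,q-r+1}\) would send a bidegree with \(p,q\) even to the bidegree \((p+r,q-r+1)\), whose entries \(p+r\) and \(q-r+1\) have opposite parities for every \(r\geq 2\); hence the target is zero, and all differentials vanish.

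Finally, \(\K^1(X)\) is the associated graded of a filtration whose successive quotients all vanish, so \(\K^1(X)=0\); and \(\K^0(X)\) is an extension of the free abelian groups \(H^{2k}(X;\Z)\), which splits because each extension of free abelian groups does, yielding \(\K^0(X)\cong\bigoplus_k H^{2k}(X;\Z)=\Z^{\oplus n}\).

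There is no serious obstacle here: the argument is standard, and every ingredient (Bott periodicity, cellular cohomology of an even-dimensional CW complex, convergence of the AHSS for a finite CW complex) is classical. The only point worth verifying carefully is the parity bookkeeping showing that all higher differentials must vanish for dimensional reasons.
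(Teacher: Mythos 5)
Your argument is correct and is essentially the same as the paper's: the paper also observes that since the singular cohomology of a flag variety is concentrated in even degrees (via the Bruhat cell decomposition), the Atiyah--Hirzebruch spectral sequence for complex \(\K\)-theory collapses, giving \(\K^1(X)=0\) and \(\K^0(X)\) free of rank equal to the number of cells. You have simply spelled out the parity bookkeeping and the splitting of the resulting extension in more detail.
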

Indeed, since the singular cohomology of \(X\) is concentrated in even degrees, the Atiyah-Hirzebruch spectral sequence computing its \(\K\)-theory must collapse. The number of cells of a flag variety \(G/H\) is equal to the index of the Weyl group of \(H\) in the Weyl group of \(G\).

In contrast, the Atiyah-Hirzebruch spectral sequence computing the \(\KO\)-groups of \(X\) does not collapse. Taken by its own, this spectral sequence only yields a description of the free parts of the \(\KO\)-groups (see \autoref{lem:KO-structure} below). However, further structural information may be obtained by studying the relation of real to complex \(\K\)-theory. Let \(\eta\) denote the generator of \(\KO^{-1}(\text{point})=\Z/2\), and let
\[
   \KO^{2i}(X) \xrightarrow{c_i} \K^0(X) \xrightarrow{r_i} \KO^{2i}(X)
\]
denote the realification and complexification maps, composed with appropriate powers of the periodicity isomorphism \( \K^i(X)\cong\K^{i-2}(X)\).  These maps fit into a long exact sequence known as the Bott sequence, which, when \(\K^1(X)=0\), breaks up into \(6\)-term exact sequences of the following form:
\begin{equation*}
   0 \rightarrow
   \KO^{2i-1}(X) \xrightarrow{\eta}
   \KO^{2i-2}(X) \xrightarrow{c_{i-1}}
   \K^0(X)       \xrightarrow{r_i}
   \KO^{2i}(X)   \xrightarrow{\eta}
   \KO^{2i-1}(X) \rightarrow
   0
\end{equation*}
In particular, multiplication by \(\eta^2\) induces an isomorphism from the cokernel of \(r_i\) to the kernel of \(c_{i-1}\), which in turn may be identified with the 2-torsion subgroup of \(\KO^{2i-2}(X)\). Writing \(\W^i(X)\) for the cokernel of \(r_i\) as in the introduction, we obtain the following structural lemma:
\begin{lem}[\cite{Hoggar}*{2.1, 2.2}]\label{lem:KO-structure}
 The \(\KO\)-groups of a CW complex \(X\) with cells only in even dimensions have the following structure:
 \begin{alignat*}{3}
    &\KO^{2i}(X)  &&= \W^{i+1}(X)\cdot\eta^2 \oplus \Z^{\oplus n_{2i}} \\
    &\KO^{2i+1}(X) &&= \W^{i+1}(X)\cdot \eta
  \end{alignat*}
Here, the ranks of the free parts of the groups of even degrees are\\
\mbox{}\hspace{1cm} \(n_0=n_4=\) the number of cells of \(X\) of dimension a multiple of \(4\),\\
\mbox{}\hspace{1cm}  \(n_2=n_6=\) the number of remaining cells of \(X\).
\end{lem}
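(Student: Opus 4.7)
The main tool is the Bott exact sequence~\eqref{seq:Bott} already displayed in the excerpt, so the proof reduces to careful bookkeeping along that sequence together with a rank computation.

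First, I would extract the odd-degree formula. Exactness at the rightmost term shows that \(\eta\colon\KO^{2i}(X)\twoheadrightarrow\KO^{2i-1}(X)\) is surjective, and exactness at \(\KO^{2i}(X)\) identifies its kernel with \(\im(r_i)\). Hence \(\eta\) descends to an isomorphism \(\W^i(X) = \KO^{2i}(X)/\im(r_i) \xrightarrow{\cong} \KO^{2i-1}(X)\); after relabelling \(i\mapsto i+1\), this yields \(\KO^{2i+1}(X) = \W^{i+1}(X)\cdot\eta\).

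For the even-degree formula, I would apply the same sequence with \(i\) replaced by \(i+1\). The injectivity of \(\eta\colon\KO^{2i+1}(X)\hookrightarrow\KO^{2i}(X)\) identifies \(\KO^{2i+1}(X)\) with \(\ker(c_i)\subset\KO^{2i}(X)\). This image is annihilated by \(2\), and since \(\K^0(X)\cong\Z^n\) is torsion-free it must coincide with the entire torsion subgroup of \(\KO^{2i}(X)\). Combining with the odd-degree formula already established, the torsion part equals \(\W^{i+1}(X)\cdot\eta^2\). The quotient \(\KO^{2i}(X)/\text{torsion}\) embeds into \(\Z^n\) via \(c_i\) and is therefore free abelian, and any extension of a free abelian group by a torsion group splits; this produces the asserted decomposition \(\KO^{2i}(X) = \W^{i+1}(X)\cdot\eta^2 \oplus \Z^{\oplus n_{2i}}\).

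It remains to identify the ranks. Counting ranks along the Bott sequence (noting that \(\KO^{2i-1}(X)\) is torsion) gives the relation \(n_{2i} + n_{2i-2} = n\), which immediately forces \(n_0 = n_4\) and \(n_2 = n_6\). To match these counts with cells, I would invoke the Pontryagin character, giving \(\KO^{2i}(X;\Q) \cong \bigoplus_k H^{2i+4k}(X;\Q)\). Because all cells are even-dimensional, the cellular cochain complex has trivial differential, so \(\dim_\Q H^p(X;\Q)\) equals the number of \(p\)-cells, and summing over \(p\equiv 2i \pmod 4\) yields the stated identification. There is no genuine obstacle: everything follows from diagram chasing along~\eqref{seq:Bott} and a single rank count, which is essentially why Hoggar's structural lemma has long been a workhorse for computations of this kind.
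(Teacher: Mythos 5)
Your argument is correct and follows exactly the route the paper indicates: the paper itself cites Hoggar and only sketches the key step (that multiplication by \(\eta^2\) identifies \(\W^i(X)\) with the kernel of \(c_{i-1}\), hence with the two-torsion), which your proof fleshes out together with the splitting and the standard rank count via the rational Bott sequence and Pontryagin character. Nothing is missing and nothing is done differently in substance.
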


\begin{rem}[Mulitplicative structure]\label{rem:KO-ring-structure}
It follows from \autoref{lem:KO-structure} that the products \( \KO^{\text{odd}}(X)\otimes\KO^{\text{odd}}(X) \rightarrow \KO^{\text{even}}(X) \) are completely described by the products in \(\W^*(X)\), in view of the following commutative diagram:
\[\xymatrix{
{\W^i(X)\otimes\W^j(X)}          \ar[r]\ar[d]^{\eta\otimes\eta}_{\cong} & {\W^{i+j}(X)}     \ar@{_{(}->}[d]^{\eta^2} \\
{\KO^{2i-1}(X)\otimes\KO^{2j-1}(X)} \ar[r]                          &  {\KO^{2i+2j-2}(X)}
}
\]
\end{rem}

\subsection{Witt rings are Tate cohomology rings}\label{sec:Witt-is-Tate}
We now describe how the Witt ring \(\W^*(X)\) of a flag variety may be obtained directly from the \(\K\)-ring \(\K^0(X)\). In general, the \(\K\)-ring of a space \(X\) comes equipped with an involution
\[
   \K^0(X) \xrightarrow{\star} \K^0(X)
\]
which sends the class of a vector bundle \(\vb E\) to the class of the dual bundle \(\vb E^\dual\). This involution allows us to view \(\K^0(X)\) as a \(\Z/2\)-module. Let \(h^i(X)\) denote the Tate cohomology groups of \(\Z/2\) with coefficients in \(\K^0(X)\). These groups are 2-periodic in \(i\), and we will write \(h^+(-)\) and \(h^-(-)\) to denote the groups of even and odd degrees, respectively. Concretely, these groups may be defined as follows:
\begin{align*}
  h^+(X) &= \frac{\ker(\id-\;\star)}{\im(\id+\;\star)}\\
  h^-(X) &= \frac{\ker(\id+\;\star)}{\im(\id-\;\star)}
\end{align*}
The realification and complexification maps descend to the following well-defined maps between the Tate cohomology groups and quotients/subgroups of the \(\KO\)-groups of \(X\):\footnote{%
We use the notation \(f\under A\) and \(B/f\) to denote the kernel and cokernel of a morphism \(f\colon A\rightarrow B\), respectively.}
\begin{equation*}
  \KO^{2i}(X)/r \xrightarrow{\bar c_i}
  h^i(X)       \xrightarrow{\bar r_i}
  c\under\KO^{2i}(X)
\end{equation*}
The cokernels \(\KO^{2i}(X)/r\) are of course precisely the Witt groups \(\W^i(X)\).
A crucial observation is that these maps even allow us to identify the Witt groups of \(X\) with its Tate cohomology groups, at least up to a slight difference in grading.
\begin{Bousfield-lem}
For any space \(X\) with \(\K^1(X)=0\), the complexification and realification maps induce isomorphisms
\begin{align*}
\W^0(X) \oplus \W^2(X) \xrightarrow[\mm{\bar c_0&\bar c_2}]{\;\cong\;}
   h^+(X) &\xrightarrow[\mm{\bar r_{-1} \\\bar r_1}]{\;\cong\;}
   \;c\under\!\KO^{-2} \oplus \;c\under\!\KO^2(X) \\
\W^1(X) \oplus \W^3(X) \xrightarrow[\mm{\bar c_1&\bar c_3}]{\;\cong\;}
   h^-(X) &\xrightarrow[\mm{\bar r_0\\\bar r_2}]{\;\cong\;}
   \;\;c\under\!\KO^0\;\;  \oplus \;c\under\!\KO^4(X)
\end{align*}
The composition along each row is given by \( \mm{\eta^2 & 0 \\ 0 & \eta^2 } \).
\end{Bousfield-lem}
\begin{proof}
This is essentially \cite{Bousfield:2-primary}*{Lemma~4.7}.  The realification and complexification maps and the involution satisfy the following relations \cite{Bousfield:K-local-spectra}*{4.7}:
\begin{align*}
  c_i r_i &= \id + (-1)^i\star  & \star c_i &= (-1)^i c_i  &  r_{i-1} c_i &= \eta^2\\
  r_i c_i &= 2                &  r_i  \star &= (-1)^i r_i&
\end{align*}
The claim of the lemma concerning the composition along each row follows immediately. Moreover, we have already seen in \autoref{lem:KO-structure} that multiplication by \(\eta^2\) induces an isomorphism from the Witt groups of \(X\) to the kernels of the complexification maps, so the first map in each row must be injective while the second map must be surjective.

We now show that injectivity of \( (\bar c_{i-2} \; \bar c_i ) \) implies injectivity of \(\leftexp{t}{(\bar r_{i-1} \; \bar r_{i+1})} \):
Suppose \(\bar x\in h^i(X) \) is contained in the kernel of \(\leftexp{t}{(\bar r_{i-1} \; \bar r_{i+1})} \).
Choose a representative \(x\in\K^0(X)\) such that \(x^* = (-1)^ix\).
Then \(r_{i\pm 1}(x) = 0\) and Bott's sequence implies that for \(j=i-2\) and \(j=i\) there are elements \(y_j\in\KO^{2j}(X)\) such that \(c_j(y_j) = x \).
Consequently, the image of  \((\bar y_{i-2},\bar y_i) \) under \((\bar c_{i-2} \; \bar c_i) \) is zero in the two-torsion group \(h^+(X)\).
So the injectivity of the latter map implies that \(\bar y_{i-2}\) and \(\bar y_i\) vanish in \(\W^{i-2}(X)\) and \(\W^i(X)\), respectively.
In particular, \(y_i\) is contained in the image of \(r_i\) and, thus, \(x\) is contained in the image of \(c_ir_i=\id+(-1)^i\star\). We deduce that \(\bar x = 0\) in \(h^i(X)\), as required.
\end{proof}

The ring structure of \(\K^0(X)\) induces a ring structure on the direct sum of the Tate cohomology groups
\[
   h^*(X) := h^+(X)\oplus h^-(X),
\]
so we will call \(h^*(X)\) the Tate cohomology ring of \(X\). Since complexification is a ring homomorphism \(\KO^*(X)\rightarrow\K^*(X)\), the first two isomorphisms in Bousfield's Lemma even induce an isomorphism of \emph{rings}
\begin{equation}\label{eq:Bousfield}
  \bar c \colon \W^*(X) \xrightarrow{\;\cong\;} h^*(X)
\end{equation}
mapping the sum of the Witt groups of even degrees to \(h^+(X) \) and the sum of the Witt groups of odd degrees to \(h^-(X)\).
This ring isomorphism will be our main tool in the computation of the Witt rings of full flag varieties.
The isomorphism induced by realification will be used only to identify the \(\Z/4\)-grading of \(\W^*(X)\).

\begin{rem}\label{rem:Witt}%
An analogue of Bousfield's Lemma holds for Balmer's algebraic Witt ring of any smooth cellular variety over an algebraically closed field of characteristic not two \cite{Me:Vanishing}*{Theorem~2.3}.  As a corollary, we obtain a new proof of the fact that the algebraic Witt groups of complex flag varieties agree with the topological Witt groups considered here. This approach is significantly more elementary than the one taken in \cite{Me:WCCV}.  Moreover, it is immediate that we have an isomorphism of Witt \emph{rings}.
\end{rem}

\section{Representations and homogeneous bundles}
As we have seen, the \(\K\)-group \(\K^0(G/H)\) of a flag variety is rather simple. However, a theorem of Hodgkin asserts that it has an interesting ring structure which may be described in terms of representations of \(H\) and \(G\):
\[
    \K^0(G/H)\cong\factor{\RepC(H)}{\keridealC{G}},
\]
where \(\RepC(H)\) is the complex representation ring of \(H\) and \(\keridealC{G}\) is the ideal generated by rank zero virtual representations of \(G\). The theorem  may be divided into two separate assertions:
\begin{enumerate}[label=(\alph*)]
\item We have an epimorphism \(\aC\colon \RepC(H)\twoheadrightarrow \K^0(G/H)\).
\item The kernel of this epimorphism is given by \(\keridealC{G}\).
\end{enumerate}
In \autoref{sec:homogeneous-bundles}, we briefly explain why the first assertion for real representations and vector bundles fails. The second assertion is briefly discussed in \autoref{sec:Hodgkin}. Apart from Hodgkin's Theorem itself, the only part of this discussion that will be relevant to our computations is the \emph{existence} of a real analogue of \(\aC\).
First, however, we need to fix some notation regarding representation rings.

\subsection{Representation rings of compact Lie groups}\label{sec:reps}
The complex representation ring of a compact Lie group \(G\) will be denoted \(\RepC(G)\). Similarly, the \(\K\)-groups of the categories of real and quaternionic representations of \(G\) will be denoted \( \RepR(G) \) and \( \RepH(G) \), respectively. While \( \RepR(G) \) again has a natural ring structure, \(\RepH(G)\) does not: the tensor product of two quaternionic representations is not quaternionic but real \cite{Adams:Lie}*{Definition~3.7}. It is therefore often convenient to consider real and quaternionic representations simultaneously, \ie to consider the \(\Z/2\)-graded ring \(\RepR(G)\oplus\RepH(G)\).

Just as for vector bundles, we have dualization, realification/quaternionification and complexification maps:\footnote{%
We deviate from the traditional notations \(c'\) and \(q\) for  \(c_2\) and \(r_2\) to emphasize the analogy with K-theory.}
\begin{gather*}
    \RepC(G)\xrightarrow{\star} \RepC(G)\\
\begin{aligned}
    \RepR(G) \xrightarrow{c_0}  &\;\RepC(G) \xrightarrow{r_0} \RepR(G) \\
    \RepH(G) \xrightarrow{c_2} &\;\RepC(G) \xrightarrow{r_2} \RepH(G)
\end{aligned}
\end{gather*}
Moreover, for any complex representation \( \lambda \) of \( G \), the tensor product \( \lambda\lambda^* \) carries a canonical real structure \cite{Adams:Lie}*{Lemma~7.2}. This induces a  map
\[
   \RepC(G) \xrightarrow{\phi} \RepR(G)
\]
such that \(c\phi(x) = xx^*\).

The complexification maps \(c_0\) and \(c_2\) are injective for any compact Lie group \(G\) \cite{Adams:Lie}*{Proposition~3.27}. An irreducible complex representation of \(G\) is said to be of \deffont{real type} if it is contained in the image of \(c_0\), and it is said to be of \deffont{quaternionic type} if it is contained in the image of \(c_2\).

\subsubsection{\ensuremath{G}-equivariant \ensuremath{\K}-theory}
Representations may be viewed as equivariant vector bundles over a point, so we can identify  \(\RepC(G)\) and \(\RepR(G)\) with the \(G\)-equivariant \(\K\)-groups \(\K^0_G(\point)\) and \(\KO^0_G(\point)\). We use equivariant \(\K\)-theory to define \(\Z/2\)- and \(\Z/8\)-graded complex and real representation rings as follows:
\begin{alignat*}{3}
   &\RepC^i(G)&&:=\K^i_G(\point) \\
   &\RepR^i(G)&&:=\KO^i_G(\point)
\end{alignat*}
The values of these groups in each degree may be expressed purely in terms of \(\RepC(G)\), \(\RepR(G)\) and \(\RepH(G)\) \cite{BrunerGreenlees:ko}*{Theorem~2.2.12}:
\begin{alignat*}{5}
      &\RepC^0(G) = \RepC(G)
&     &\RepC^1(G) = 0
\\\rule{0pt}{12pt}
      &\RepR^0(G) = \RepR(G)
&\quad&\RepR^{-1}(G) = \factor{\RepR(G)}{r_0}    &\;\;&(\cong W^0(G))
\\    &\RepR^2(G) = \factor{\RepC(G)}{\RepR(G)}
&     &\RepR^1(G) = 0                           &&(= W^1(G))
\\    &\RepR^4(G) = \RepH(G)
&     &\RepR^3(G) = \factor{\RepH(G)}{r_2}      &&(\cong W^2(G))
\\    &\RepR^6(G) = \factor{\RepC(G)}{\RepH(G)}
&     &\RepR^5(G) = 0                           &&(=W^3(G))
\end{alignat*}
The maps \(\eta\colon\RepR^{2i}(G)\twoheadrightarrow\RepR^{2i-1}(G)\) are the obvious epimorphisms, while the maps \(\eta\colon\RepR^{2i+1}(G)\hookrightarrow\RepR^{2i}(G)\) are monomorphisms induced by complexification.   Define the Witt groups of \(G\) by \(\W^i(G):=\RepR^{2i}(G)/r_i\) and write \(h^\pm(R(G))\) for the Tate cohomology of \(R(G)\) with respect to the involution \(\star\).  Then Bousfield's Lemma holds with the same proof as in \autoref{sec:Witt-is-Tate}:
\begin{equation}\label{eq:Bousfield-for-Rep}
\begin{aligned}
  h^+(\RepC(G))&\cong\W^{0,2}(G)\\
  h^-(\RepC(G))&\cong 0
\end{aligned}
\end{equation}

\subsubsection{The case of a torus}
The complex representation ring of a torus \(T\) may be identified with the ring of Laurent polynomials in \(\dim T \) variables:
\[
   \RepC(T) = \Z[x_1^{\pm 1}, \cdots, x_{\dim T}^{\pm 1}]
\]
The Tate cohomology of this ring is easy to compute: dualization corresponds to the involution sending \( x_i \) to \( x_i^{-1} \), and this implies that \( h^*(\RepC(T)) \) is trivial:
\begin{equation}\label{eq:Tate-of-T}
   h^*(\RepC(T)) = \Z/2\cdot\bar 1
\end{equation}
This triviality is the main simplification in the computation of the Witt rings of full flag varieties \(G/T\) as opposed to Witt rings of arbitrary flag varieties.

\begin{rem}\label{rem:RepR-T}
If follows from the triviality of \(h^*(\RepC(T))\) that the real representation ring \(\RepR(T)\) may be identified with the subring of \(\RepC(T)\) fixed by the involution.
However, an explicit description of this subring in terms of generators and relations is fairly complicated: see \cite{AsteyEtAl}*{Theorem~3.2}.
\end{rem}

\subsubsection{The simply-connected case}
In general, for any compact Lie group \(G\) with maximal torus \(T\), the complex representation ring \(\RepC(G)\) may be identified with a subring of \(\RepC(T)\) via the restriction map --- it is the subring fixed by the action of the Weyl group. When \(G\) is simply-connected, one may deduce that \(\RepC(G)\) is a polynomial ring on certain irreducible representations known as the \deffont{basic representations} of \(G\) \cite{Adams:Lie}*{Theorem~6.4.1}. Moreover, the set \(B(G)\) of these basic representations may be partitioned into disjoint subsets
\[
   B(G) = B_\C \cup c_0(B_\R) \cup c_2(B_\H)
\]
consisting of the basic representations of complex, real and quaternionic types, respectively (\cf \cite{Davis:Rep}).
The numbers \(b_\C(G)\), \(b_\R(G)\) and \(b_\H(G)\) appearing in \autoref{thm:main} are the sizes of these subsets.

While representations of real and quaternionic types are self-dual, the basic representations of complex type arise as mutually dual pairs. We may thus choose a subset \( B_\C'\subset B_\C\) such that we obtain a refined decomposition
\[
   B(G) = B_\C' \cup (B_\C')^* \cup c_0(B_\R) \cup c_2(B_\H).
\]
Finally, it will often be convenient to replace the given generators  by generators of rank zero. To indicate this, we will write \( \reduced \zeta \) for the reduced virtual representation associated with a representation \( \zeta \), \ie we define \( \reduced \zeta:= \zeta - \rank \zeta \). Thus, we may write the representation ring as
\begin{equation*}\label{eq:Rep-of-G}
   \RepC(G) = \Z[\reduced\lambda, \reduced\lambda^*,c_0(\reduced \mu),c_2(\reduced \nu)]_{\lambda\in B_\C', \mu\in B_\R, \nu\in B_\H}.
\end{equation*}

\subsection{Homogeneous bundles}\label{sec:homogeneous-bundles}
In \cite{AtiyahHirzebruch:VectorBundles}*{Conjecture~5.7}, Atiyah and Hirzebruch conjectured that all stable isomorphism classes of complex vector bundles on a flag variety \(G/H\) arise from representations of \(H\), in the following way: Given a complex representation \(\zeta\) of \(H\) of rank \( r \), we may consider \(G\times \C^r\) as an \( H \)-space, with \( H \) acting on \( G \) via multiplication and on \( \C^r \) via \( \zeta\). Then the quotient space \( (G\times \C^r)/H \) is a \(G\)-equivariant complex vector bundle over \( G/H \). In fact, this construction induces a ring isomorphism
\begin{equation}\label{eq:Segal}
  \phantom{\aC\colon} \RepC(H) \xrightarrow{\cong} \K_G^0(G/H)
\end{equation}
\cite{Segal:EquivariantK}*{\S1}. Forgetting the equivariant structure, we obtain a ring homomorphism
\begin{equation}
  \aC\colon \RepC(H) \xrightarrow{\phantom{\cong}} \K^0(G/H).
\end{equation}
Atiyah and Hirzebruch checked by direct computation that this homomorphism is surjective in many cases \cite{AtiyahHirzebruch:VectorBundles}*{Theorem~5.8}; their conjecture asserted that it is so in general. The conjecture was verified by Hodgkin and Pittie \citelist{\cite{Hodgkin}*{Lemma~9.2}\cite{Pittie}*{Theorem~3}}.

Of course, the \emph{construction} of \(\aC\) works equally well with real representations and vector bundles. Moreover, the map can be extended to higher degrees: if we interpret \(\RepC(H)\) as \(\K^0_H(\point)\), we may regard \eqref{eq:Segal} as an incarnation of a general isomorphism for equivariant cohomology theories \cite{May:Alaska}*{(4.4) in XVI\S4}. For equivariant \(\KO\)-theory \citelist{\cite{May:Alaska}*{XIV\S4}\cite{BrunerGreenlees:ko}*{2.2}}, we thus have an isomorphism\begin{equation}
  \phantom{\aO^*\colon} \RepR^*(H) \xrightarrow{\cong} \KO_G^*(G/H)
\end{equation}
which, composed with the forgetful map \(\KO_G^*(G/H)\rightarrow\KO^*(G/H)\), yields a ring homomorphism
\begin{equation}
   \aO^*\colon \RepR^*(H) \xrightarrow{\phantom{\cong}} \KO^*(G/H)
\end{equation}
analogous to the morphism \(\aC\) above.

However, \(\aO^*\) is not generally surjective, even in degree zero. We may easily pin down its cokernel in terms of Witt groups.
\begin{prop}\label{prop:coker-aO}
The cokernel of
\[
   \RepR^{2*}(H) \xrightarrow{\aO^{2*}} \KO^{2*}(G/H) \\
\]
coincides with the cokernel of the induced map \(\W^*(H) \rightarrow \W^*(G/H)\).
\end{prop}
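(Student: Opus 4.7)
The plan is to exhibit a commutative square relating \(\aC\) and \(\aO^{2*}\) via realification, and then deduce the statement by a one-line diagram chase that exploits the surjectivity of \(\aC\) guaranteed by Hodgkin-Pittie.

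First I would record the naturality square. For each \(i\in\Z/4\), the realification maps fit into a diagram
\[
\xymatrix{
\RepC(H) \ar[r]^{\aC} \ar[d]_{r_i}  & \K^{2i}(G/H) \ar[d]^{r_i} \\
\RepR^{2i}(H) \ar[r]^{\aO^{2i}}    & \KO^{2i}(G/H)
}
\]
The commutativity is immediate if one recalls that both \(\aC\) and \(\aO^{2i}\) are obtained from the corresponding Segal isomorphisms between \(G\)-equivariant K-theories of \(G/H\) and \(H\)-equivariant K-theories of a point by applying the forgetful functor, so that the square is just the naturality of the realification transformation between the two equivariant theories.

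Next I would invoke the Hodgkin-Pittie result quoted in Section~\ref{sec:homogeneous-bundles}: \(\aC\) is surjective. Passing to cokernels of the vertical maps then gives precisely the map \(\W^*(H)\to\W^*(G/H)\) induced by \(\aO^{2*}\), so it remains to compare its cokernel with the cokernel of \(\aO^{2*}\) itself. Unfolding definitions,
\[
\coker\bigl(\W^*(H)\to\W^*(G/H)\bigr) \;=\; \KO^{2*}(G/H)\Big/\bigl(\im\aO^{2*} + \im r\bigr),
\]
while \(\coker(\aO^{2*}) = \KO^{2*}(G/H)/\im\aO^{2*}\); so the two agree as soon as \(\im r \subseteq \im\aO^{2*}\). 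Using surjectivity of \(\aC\), any element of \(\im r\subset\KO^{2*}(G/H)\) may be written as \(r(\aC(\xi)) = \aO^{2*}(r(\xi))\) for some \(\xi\in\RepC(H)\) by commutativity of the square, and is therefore in \(\im\aO^{2*}\), as required.

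I do not anticipate a serious obstacle: the only real input beyond formal bookkeeping is Hodgkin-Pittie, and the remaining content is the naturality square, which follows from the general theory of equivariant cohomology transformations. The slight care needed is to ensure that the square commutes in each of the four parities of \(\Z/4\), but this is uniform since realification is a natural transformation of \(\Z/8\)-graded equivariant theories, not just in degree zero.
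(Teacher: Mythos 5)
Your proof is correct and follows essentially the same approach as the paper's: both set up the commutative square relating $\aC$ and $\aO^{2*}$ via realification, invoke the surjectivity of $\aC$ from Hodgkin--Pittie, and pass to cokernels. The paper phrases the conclusion as a diagram chase on the two exact rows $\RepC^0(H)\to\RepR^{2i}(H)\to\W^i(H)\to 0$ and $\K^0(G/H)\to\KO^{2i}(G/H)\to\W^i(G/H)\to 0$, while you make the key step $\im r\subseteq\im\aO^{2*}$ explicit, but the content is identical.
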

\begin{example} Consider a full flag variety \(G/T\). By equations \eqref{eq:Bousfield-for-Rep} and \eqref{eq:Tate-of-T} of \autoref{sec:reps} we have \( \W^*(T)= \Z/2 \), and the map induced by \(\aO\) is simply the inclusion of the trivial subring \(\Z/2 \hookrightarrow \W^*(G/T)\). Our computation of the Witt ring of \(G/T\) in \autoref{thm:main} will show in particular that the cokernel of this inclusion is far from being trivial. For example, \autoref{table:bbb} tells us that \(\W^*(\mathrm{SU}(6)/T)\) is an exterior algebra on one generator of degree one and two generators of degree three, so said cokernel is non-trivial in all degrees.
\end{example}
\begin{proof}[Proof of \autoref{prop:coker-aO}]
It follows from the construction of \(\aC\) and \(\aO\) that these maps are compatible with the structure maps \(r_i\) and \(c_i\). In particular, we may consider the following commutative diagram:
\[\xymatrix{
  {\RepC^0(H)} \ar@{->>}[d]^{\aC} \ar[r]^{r_i}
& {\RepR^{2i}(H)}      \ar[d]^{\aR^{2i}}   \ar@{->>}[r]
& {\W^i(H)}  \ar[d]\\
  {\K^0(G/H)}  \ar[r]^{r_i}
& {\KO^{2i}(G/H)} \ar@{->>}[r]
& {\W^i(G/H)}
}\]
This diagram has exact rows by our definition of the Witt groups, and \(\aC\) is surjective by the work of Hodgkin and Pittie mentioned above. This implies that the cokernel of the second vertical map agrees with the cokernel of the third, as claimed.
\end{proof}
The maps \(\aO^{2i-1}\colon \RepR^{2i-1}(H) \rightarrow \KO^{2i-1}(G/H)\) in odd degrees may of course be identified directly with the maps \(\W^i(H)\rightarrow\W^i(G/H)\).

\subsection{Hodgkin's Theorem}\label{sec:Hodgkin}\label{sec:K-and-Rep}
At the beginning of the previous section, we explained how \(\aC\) associates vector bundles over \(G/H\) with representations \(\zeta\) of \(H\).
It follows from the construction that when \(\zeta\) is the restriction of a representation of \(G\), the associated vector bundle may be trivialized. In particular, the ideal
\[
   \keridealC{G}\subset\RepC(H)
\]
generated by virtual representations of \(G\) of rank zero is contained in the kernel of \(\aC\).
In \cite{Hodgkin}*{Lemma~9.2}, Hodgkin shows not only that \(\aC\) is surjective but also that its kernel is precisely this ideal:
\begin{Hodgkin-thm}
For any flag variety \(G/H\) with \(G\) simply-connected, \(\aC\) induces a ring isomorphism
\begin{equation}
   \baraC\colon \factor{\RepC(H)}{\keridealC{G}} \xrightarrow{\cong} \K^0(G/H).
\end{equation}
\end{Hodgkin-thm}
Hodgkin's proof is based on the construction of a K^^fcnneth spectral sequence for equivariant \(\K\)-theory. A different proof, leading to a more general algebraic version of the theorem, is due to Panin \cite{Panin:TwistedFlags}. A key ingredient in both approaches which we will need later is the following theorem of Pittie and Steinberg:
\begin{Steinberg-thm}[\cite{Steinberg:Pittie}*{Theorem~1.1}]\label{thm:Steinberg}
The representation ring \(\RepC(H)\) of a connected subgroup \(H\) of maximal rank in a simply-connected compact Lie group \(G\) is a free \(\RepC(G)\)-module of finite rank.
\end{Steinberg-thm}
Steinberg's proof also shows that the rank of \(\RepC(H)\) over \(\RepC(G)\) is equal to the index of the Weyl group of \(H\) in the Weyl group of \(G\). In particular, the additive description of \( \K^0(G/H) \) given in \autoref{lem:K-structure} may easily be recovered.

We now define a factorization of \(\aO^*\) analogous to the factorization \(\baraC\) of \(\aC\). Let \(\tilde\RepR{}^*(G)\) denote the kernel of the forgetful map \(\RepR^*(G)\rightarrow\RepR^*(\{1\})\), and let
\[
    \keridealO^*(G) \subset \RepR^*(H)
\]
be the ideal generated by the image of \(\tilde\RepR{}^*(G)\) and by the images of \(\keridealC{G}\) under the maps \(r_i\). Then \(\aO^*\) factors through
\begin{equation}
    \baraO^*\colon \factor{\RepR^*(H)}{\keridealO^*(G)} \rightarrow \KO^*(G/H),
\end{equation}
and the factorization is compatible with the structure maps \(r_i\) and \(c_i\). In particular, the following diagram commutes:
\begin{equation}\label{eq:ra-is-ar}
\begin{aligned}
\xymatrix{
  {\factor{\RepC^0(H)}{\keridealC{G}}}\ar[d]^{r_i}\ar[r]^-{\baraC}
& {\K^0(G/H)}               \ar[d]^{r_i}\\
  {\factor{\RepR^{2i}(H)}{\keridealO^{2i}(G)}}\ar[r]^-{\baraO^{2i}}
& {\KO^{2i}(G/H)}\\
}
\end{aligned}
\end{equation}
This will be used to identify the \(\Z/4\)-grading of \(\W^*(G/H)\) in the proof of \autoref{thm:main}.
\begin{rem}[Description of \(\keridealRH{G}\)]
The ideal \(\keridealRH{G}\subset \RepRH(H)\) is generated by real and quaternionic virtual representations of \(G\) of rank zero, together with the images of \(\keridealC{G}\) under realification and quaternionification.
In the notation of \autoref{sec:reps}, \(\keridealC{G}\) and \(\keridealRH{G}\) may be written as follows:
\begin{align*}
\keridealC{G}
  &= (\reduced\lambda,\reduced\lambda^*, c_0\reduced\mu,c_2\reduced\nu)_{\lambda,\mu,\nu} \\
\keridealRH{G}
  &= (\phi\reduced\lambda,\reduced\mu,\reduced\nu)_{\lambda,\mu,\nu}
     + r_0(\keridealC{G})
     + r_2(\keridealC{G})
\end{align*}
\end{rem}
\begin{rem}[Injectivity of \(\baraRH\)]\label{rem:ker-alphaRH}
It seems natural to ask whether the ideal \(\keridealRH{G}\) coincides with the kernel of
\(
   \aRH
\).
Our computations for flag varieties in \autoref{sec:Tate-of-quotients} imply that this is true only in special cases:
\end{rem}
\begin{prop}
For a complete flag variety \(G/T\), the map
\[
   \baraO^{0,4}\colon \factor{\RepR^{0,4}(T)}{\keridealO^{0,4}(G)} \rightarrow \KO^{0,4}(G/T)
\]
is injective only when \(G\) is of one of the following low-dimensional types:
\[
A_n \text{ with } 1\leq n\leq 4, \quad A_n\times A_m \text{ with } 1\leq n,m\leq 2, \quad B_2,\quad G_2.
\]
\end{prop}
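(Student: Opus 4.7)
The plan is to compare the sizes of source and target by separating free and torsion parts, and to conclude via the Main Theorem's explicit description of $\W^*(G/T)$.
Both sides are finitely generated abelian groups whose rational ranks both equal $|W(G)|$: for the target, this follows from \autoref{lem:KO-structure} together with the observation that the lengths of Weyl elements are equidistributed modulo $2$; for the source, this follows from Hodgkin's Theorem combined with the compatibility diagram~\eqref{eq:ra-is-ar}, which identifies the rational image of $\baraO^{0,4}$ with two copies of $\K^0(G/T)\otimes\Q$ glued along the realification maps $r_0,r_2$. Consequently injectivity of $\baraO^{0,4}$ is equivalent to injectivity on the $2$-torsion subgroups.

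\textbf{Torsion analysis.} The $2$-torsion of the target is $\W^1(G/T)\oplus\W^3(G/T)$, which by \autoref{thm:main} and \autoref{lem:KO-structure} is an explicit subspace of the exterior algebra $\Lambda(x_1,\dots,x_{b_\H};y_1,\dots,y_{b_\C/2+b_\R})$, and whose $\Z/2$-dimension grows quickly with the total number of generators $b_\H+b_\R+b_\C/2$. The $2$-torsion of the source, on the other hand, originates from $\W^0(T)=\Z/2$ together with the relations in $\keridealO^{0,4}(G)$ coming from $\phi(\reduced\lambda)$, $\reduced\mu$ and $\reduced\nu$; using the description of $\keridealRH{G}$ recalled in the preceding remark, one controls its size by a modest construction of dimension bounded independently of the exterior algebra on the target side.

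\textbf{Case check.} When $b_\H+b_\R+b_\C/2\leq 2$ --- exactly the groups listed in the proposition, as one reads off \autoref{table:bbb} --- the target $2$-torsion is small enough to accommodate the source $2$-torsion, and one verifies injectivity directly. When the inequality fails, the surplus exterior relations such as $x_i x_j + x_j x_i=0$, $x_i^2=0$ and $y_i^2=0$ in $\W^*(G/T)$ lift to explicit elements of $\RepR^{0,4}(T)$ lying in $\ker(\aO^{0,4})\setminus\keridealO^{0,4}(G)$, and $\baraO^{0,4}$ fails to be injective.

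\textbf{Main obstacle.} The technical crux is two-fold: controlling the $2$-torsion of the source well enough to see that it genuinely injects in the listed cases (where the naive upper bound and the target dimension actually coincide), and producing explicit non-trivial kernel elements from the exterior relations in all other cases. Both steps are expected to be tractable by reapplying the Tate-cohomology techniques developed in \autoref{sec:Tate-of-quotients} for the proof of the Main Theorem, now carried out on $\RepR^{0,4}(T)/\keridealO^{0,4}(G)$ in parallel with the analogous computation for $\KO^{0,4}(G/T)$.
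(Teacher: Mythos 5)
Your numerical criterion $\tfrac{b_\C}{2}+b_\R+b_\H\leq 2$ agrees with the paper's, but the route you propose does not actually reach it, and differs fundamentally from the paper's argument. The paper does not compare sizes of source and target. Instead, it exploits the fact (from \cite{Bousfield:2-primary}*{Theorem~4.4}) that $(\RepC(H),\RepRH(H))$ and $(\K^0(X),\KO^{0,4}(X))$ each sit in an exact complex
\[
\cdots \to \RepC(H)\xrightarrow{1-\star}\RepC(H)\xrightarrow{(r_0,r_2)}\RepRH(H)\xrightarrow{(c_0,-c_2)}\RepC(H)\xrightarrow{1-\star}\cdots
\]
built from the Bott sequence. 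Comparing these two complexes via $\aC$ and $\aRH$ and doing the homological algebra, one \emph{identifies} $\ker\baraRH$ with the cokernel of the map $\keridealR{G}/r_0 \oplus \keridealH{G}/r_2 \to h^+(\keridealC{G})$. One then computes the source (it is the $\Z/2$-span of $\phi\reduced\lambda,\reduced\mu,\reduced\nu$, so has dimension at most $\tfrac{b_\C}{2}+b_\R+b_\H$) and the target (the odd part of an exterior algebra on $\tfrac{b_\C}{2}+b_\R+b_\H$ degree-$1$ generators, by \autoref{sec:Tate-of-quotients}), and observes that the map is the inclusion of the span of the degree-$1$ generators. Surjectivity of this inclusion is equivalent to the odd part of the exterior algebra being spanned by its degree-$1$ part, i.e.\ to having at most two generators. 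No dimension-count of $\KO^{0,4}(G/T)$ or of the source of $\baraO^{0,4}$ is needed.

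Your proposal, by contrast, has two genuine gaps. First, the rank claim: you assert both sides have rational rank $\lvert W(G)\rvert$, and that the rational image of $\baraO^{0,4}$ is ``two copies of $\K^0(G/T)\otimes\Q$ glued along $r_0,r_2$,'' but neither the rank of $\RepR^{0,4}(T)/\keridealO^{0,4}(G)$ nor the structure of the rational image is established --- this would itself need an argument of roughly the same difficulty as the proposition. Second, and more seriously, a size comparison of $2$-torsion cannot by itself decide injectivity: if the source torsion is ``small enough to fit,'' you still have to show the particular map is injective, and if it is ``too big'' you still need to produce an actual nonzero kernel element. You flag exactly these two steps as the ``technical crux'' and defer them; they are not routine, and the exterior-algebra relations you mention ($x_ix_j+x_jx_i$, $x_i^2$, $y_i^2$) are relations in the target rather than ready-made kernel elements of $\baraO^{0,4}$. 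The paper's homological identification of $\ker\baraRH$ is precisely the device that replaces this missing argument with a clean computation.
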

\begin{proof}
Since this proposition is not central to the aims of this paper, we include only a brief sketch of the argument. For any compact Lie group \(H\), and for any space \(X\), the pairs of abelian groups \((\RepC(H),\RepRH(H))\) and \((\K^0(X),\KO^{0,4}(X))\) fit into complexes of the form
\begin{smalldiagram}
\xymatrix@C=2em@R=2em{
  {\dots}    \ar[r]
& {\RepC(H)} \ar[r]^{1-\star}
& {\RepC(H)} \ar[r]^{\mm{r_0\\r_2}}
& {\RepRH(H)}\ar[r]^{\mm{c_0&-c_2}}
& {\RepC(H)} \ar[r]^{1-\star}
& {\RepC(H)} \ar[r]
& {\dots}
\\
  {\dots}       \ar[r]
& {\K^0(X)}     \ar[r]^{1-\star}
& {\K^0(X)}     \ar[r]^{\mm{r_0\\r_2}}
& {\KO^{0,4}(X)}\ar[r]^{\mm{c_0&-c_2}}
& {\K^0(X)}     \ar[r]^{1-\star}
& {\K^0(X)}     \ar[r]
& {\dots}
}
\end{smalldiagram}%
The relevant Bott exact sequences imply that these complexes are exact \cite{Bousfield:2-primary}*{Theorem~4.4}.
Taking \(X\) to be a flag variety \(G/H\), we may compare the two sequences via the maps \(\aC\) and \(\aRH\).
Basic homological algebra then shows that the kernel of \(\baraRH\) may be identified with the homology of the corresponding complex for \((\keridealC{G},\keridealRH{G})\) at \(\keridealRH{G}\to\keridealC{G}\to \keridealC{G}\), which we may view as the cokernel of
\begin{align*}
\factor{\keridealR{G}\oplus\keridealH{G}}{\mm{r_0\\r_2}} &\xrightarrow{\mm{ c_0 & -c_2 }} \ker(\keridealC{G}\xrightarrow{1-\star}\keridealC{G}).\\
\intertext{%
This cokernel maps isomorphically to the cokernel of
}
  \factor{\keridealR{G}}{r_0} \oplus \factor{\keridealH{G}}{r_2} &\xrightarrow{\mm{\bar c_0&\bar c_2}} h^+(\keridealC{G}).
\end{align*}
In particular, \(\baraO^{0,4}\) is injective if and only if \((\bar c_0\;\;\bar c_2)\) is surjective.
For a full flag variety \(G/T\), the domain of \((\bar c_0\;\;\bar c_2)\) is easy to compute: the fact that
\[
  \RepR^0(T)/r_0\oplus\RepR^4(T)/r_2 = \W^{0,2}(T) =  \Z/2\cdot \bar 1_\R
\]
implies that \(\keridealR{G}/r_0\oplus\keridealH{G}/r_2\) is generated as a \(\Z/2\)-module by the elements \(\phi\reduced\lambda\), \(\reduced\mu\) and \(\reduced\nu\)  with \(\lambda\), \(\mu\) and \(\nu\) in \(B'_\C(G)\), \(B_\R(G)\) and \(B_\H(G)\) respectively.

On the other hand, the computations in \autoref{sec:Tate-of-quotients} will imply that \(h^+(\keridealC{G})\) is additively isomorphic to the odd-degree part of an exterior algebra on \(\frac{b_\C}{2} + b_\R + b_\H\) generators of degree \(1\):
\[
   h^+(\keridealC{G}) \cong h^-(\RepC(T)/\keridealC{G}) = \left[\Lambda_{\Z/2}(u_\lambda,v_\mu,w_\nu)_{\lambda,\mu,\nu}\right]^{\odd}
\]
Moreover, one may check that under this additive isomorphism the generators \(u_\lambda\), \(v_\mu\) and \(w_\nu\) correspond to the elements \(c_0\phi\reduced\lambda\), \(c_0\reduced\mu\) and \(c_2\reduced\nu\) in \(h^+(\keridealC{G})\), respectively.
The map  \((\bar c_0\;\;\bar c_2)\) is simply the inclusion of the \(\Z/2\)-linear subspace spanned by these elements. It is surjective if and only if the exterior algebra \(\Lambda_{\Z/2}(u_\lambda,v_\mu,w_\nu)_{\lambda,\mu,\nu}\)  is generated by at most \(2\) elements, \ie if and only if
\[
   \frac{b_\C(G)}{2}+b_\R(G)+b_\H(G) \leq 2.
\]
This holds only in the cases listed in the proposition.
\end{proof}

\newpage
\section{Witt rings of full flag varieties}\label{sec:W-of-G/T}
Hodgkin's Theorem (\autoref{sec:Hodgkin}) and Bousfield's Lemma (\autoref{sec:Witt-is-Tate}) have the following corollary:
\begin{cor}\label{cor:main}
For any flag variety \(G/H\) with \(G\) simply-connected, we have an isomorphism of \(\Z/2\)-graded rings
\[
   \W^*(G/H) \xrightarrow{\cong} h^*\left(\factor{\RepC(H)}{\keridealC{G}}\right),
\]
where \(h^*(-):=h^+(-)\oplus h^-(-)\) denotes Tate cohomology.
\end{cor}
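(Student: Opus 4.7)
The plan is to assemble the corollary as a direct combination of Bousfield's Lemma (from \autoref{sec:Witt-is-Tate}) and Hodgkin's Theorem (from \autoref{sec:Hodgkin}), with one compatibility check in between.

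First, I would observe that \autoref{lem:K-structure} gives \(\K^1(G/H)=0\), so the hypothesis of Bousfield's Lemma is satisfied. Applying the ring-level version of that lemma (the isomorphism \(\bar c\) in \eqref{eq:Bousfield}) yields an isomorphism of \(\Z/2\)-graded rings
\[
  \W^*(G/H) \xrightarrow{\;\cong\;} h^*(G/H) = h^*\bigl(\K^0(G/H)\bigr),
\]
where on the right \(\K^0(G/H)\) is viewed as a \(\Z/2\)-module via the duality involution \(\star\).

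Next, I would invoke Hodgkin's Theorem to obtain a ring isomorphism \(\baraC\colon \RepC(H)/\keridealC{G} \xrightarrow{\cong} \K^0(G/H)\). The key compatibility to verify is that \(\baraC\) intertwines the two duality involutions: the involution \(\star\) on \(\K^0(G/H)\) sending a bundle to its dual pulls back to the involution on \(\RepC(H)/\keridealC{G}\) induced by the duality involution on \(\RepC(H)\). This follows directly from the construction of \(\aC\) recalled in \autoref{sec:homogeneous-bundles}: the vector bundle associated to a representation \(\zeta\) of \(H\) has fibre-wise \(H\)-module structure given by \(\zeta\), so its dual bundle corresponds to \(\zeta^\dual\); moreover, the ideal \(\keridealC{G}\) is preserved by \(\star\) because it is generated by virtual representations of \(G\) of rank zero, which are manifestly closed under duality. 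Consequently \(\baraC\) is a morphism of \(\Z/2\)-modules, so it induces an isomorphism on Tate cohomology rings.

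Composing the two isomorphisms produces the desired ring isomorphism
\[
  \W^*(G/H) \xrightarrow{\;\cong\;} h^*\!\left(\factor{\RepC(H)}{\keridealC{G}}\right).
\]
There is essentially no obstacle here beyond the duality-compatibility check, which is immediate from the definitions; the substance of the result lies entirely in the two inputs already established in \autoref{sec:Witt-is-Tate} and \autoref{sec:Hodgkin}.
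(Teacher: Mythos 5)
Your argument is correct and follows exactly the same route as the paper: combine the ring-level isomorphism of Bousfield's Lemma with Hodgkin's Theorem, after checking that \(\baraC\) respects the duality involutions. You merely spell out two verifications that the paper states more tersely (that \(\K^1(G/H)=0\), and why \(\aC\) intertwines \(\star\)), which is harmless extra detail.
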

\begin{proof}
It suffices to observe that the isomorphism \(\baraC\colon \factor{\RepC(H)}{\keridealC{G}} \cong \K^0(G/H)\) of Hodgkin's Theorem is an isomorphism of rings with involution, for the involutions induced by dualizing representations and vector bundles. We thus obtain an isomorphism of Tate cohomology rings, which, combined with Bousfield's Lemma \eqref{eq:Bousfield}, yields the desired identification:
\begin{equation}\label{eq:main-iso}
   \W^*(G/H)\xrightarrow[\bar c]{\cong} h^*(G/H) \xleftarrow[\baraC]{\cong} h^*\left(\factor{\RepC(H)}{\keridealC{G}}\right)
\end{equation}
Note that \(h^*(G/H)\) is our notation for \(h^*(\K^0(G/H))\).
\end{proof}

We now apply this corollary to a full flag variety \(G/T\). Recall from \autoref{sec:reps} that \( h^*(\RepC(T)) \) is trivial:
\begin{align*}
   h^+(\RepC(T)) &= \Z/2\cdot\bar 1 \\
   h^-(\RepC(T)) &= 0
\end{align*}
On the other hand, we have seen that the complex representation ring of \(G\) may be written as
\[
   \RepC(G) = \Z[\reduced\lambda, \reduced\lambda^*,c_0(\reduced \mu),c_2(\reduced \nu)]_{\lambda\in B_\C', \mu\in B_\R, \nu\in B_\H}.
\]
Viewing \(\RepC(G)\) as a subring of \(\RepC(T)\) via the restriction map, we may consider the following virtual representations of \(G\) as elements of \(\RepC(T)\):
\begin{align*}{}
  \reduced\lambda\reduced\lambda^* \quad & \text{ with } \lambda\in B_\C' \\
  c_0(\reduced \mu)                  \quad &  \text{ with } \mu\in B_\R\\
  c_2(\reduced \nu)                 \quad & \text{ with } \nu\in B_\H
\end{align*}
All of these elements are self-dual, so they define classes in \(h^+(\RepC(T))\). But since \(h^+(\RepC(T))\) is so simple and the elements have rank zero, the corresponding classes must vanish. By definition of \(h^+\), this means that we may find elements \(u_\lambda\), \(v_\mu\) and \(w_\nu\) in \( \RepC(T) \) such that
\begin{equation}\label{eq:uvw-relations}
\left.
\begin{aligned}
  u_\lambda + u_\lambda^* &= \reduced\lambda\reduced\lambda^*  \\
  v_\mu + v_\mu^* &= c_0(\reduced \mu)                         \\
  w_\nu + w_\nu^* &= c_2(\reduced \nu)
\end{aligned}
\right\}\text{ in } \RepC(T).
\end{equation}
Consider these equations in \(\K^0(G/T) \cong \RepC(T)/\keridealC{G}\). Here, the right-hand sides vanish since they are given by virtual representations of \(G\) of rank zero. Thus, the images of \(u_\lambda\), \(v_\mu\) and \(w_\nu\) in \( \K^0(G/T) \) are anti-self-dual. With a little more work one may see that these images generate the Tate cohomology ring of \(G/T\). In fact, we have the following:

\begin{prop}\label{prop:h(G/T)}
Let \( G/T \) be a full flag variety with \(G\) simply-connected. The Tate cohomology ring
\[
   h^*(\factor{\RepC(T)}{\keridealC{G}})
\]
is an exterior \(\Z/2\)-algebra on certain generators
   \(u_\lambda, v_\mu, w_\nu \in h^-(-)\).
These generators may be represented by elements of \(\RepC(T)\) satisfying \eqref{eq:uvw-relations} above.
\end{prop}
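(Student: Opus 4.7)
The plan is to exploit the free structure afforded by Steinberg's Theorem and read $h^*$ off a Koszul resolution.

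First, the lifts $u_\lambda, v_\mu, w_\nu \in \RepC(T)$ exist for elementary reasons. The three families $\reduced\lambda\reduced\lambda^*$, $c_0(\reduced\mu)$ and $c_2(\reduced\nu)$ consist of self-dual rank-zero elements of $\RepC(T)$, and since $h^+(\RepC(T)) = \Z/2\cdot\bar 1$ is generated by the rank-one trivial representation, every self-dual rank-zero element lies in the image of $1 + \star$. This gives lifts satisfying \eqref{eq:uvw-relations}; reducing modulo $\keridealC{G}$ the right-hand sides vanish, so the images are anti-self-dual and define classes $\bar u_\lambda, \bar v_\mu, \bar w_\nu$ in $h^-$ of the quotient.

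The main computation proceeds via a Koszul resolution. By Steinberg's Theorem, $\RepC(T)$ is free (hence flat) over $\RepC(G) = \Z[\reduced\lambda, \reduced\lambda^*, c_0\reduced\mu, c_2\reduced\nu]$, so the generators of $\keridealC{G}$ form a regular sequence in $\RepC(T)$. The associated Koszul complex $K_\bullet = \RepC(T)\otimes_{\Z}\Lambda(e_\lambda, e_{\lambda^*}, e_\mu, e_\nu)$ with $d(e_i) = y_i$ is therefore a $\Z[\Z/2]$-equivariant free resolution of $\RepC(T)/\keridealC{G}$, where $\star$ swaps $e_\lambda\leftrightarrow e_{\lambda^*}$ and fixes $e_\mu$, $e_\nu$. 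The key observation is that $\RepC(T) = \Z[x_1^{\pm 1},\ldots,x_m^{\pm 1}]$ decomposes as a $\Z[\Z/2]$-module into the trivial summand $\Z\cdot 1$ and a direct sum of copies of the regular representation (one for each non-trivial orbit $\{x^a, x^{-a}\}$), so that $h^*(\RepC(T)\otimes_{\Z} N) \cong h^*(N)$ for every $\Z$-flat $\Z[\Z/2]$-module $N$.

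Applied term-wise to $K_\bullet$, this makes the $E_1$-page of the hypercohomology spectral sequence equal to $h^*(\Lambda(e_\lambda, e_{\lambda^*}, e_\mu, e_\nu))$; the induced $d_1$, namely $e_i\mapsto y_i$ followed by projection onto $\Z\cdot 1$, equals $\rank(y_i) = 0$. The Tate cohomology of the exterior algebra is then computed factor-by-factor: each $\Lambda(e_\lambda, e_{\lambda^*})$ splits as trivial $\oplus$ regular $\oplus$ sign, contributing $\Z/2$ in $h^+$ from the unit and $\Z/2$ in $h^-$ from the sign-twisted top form $e_\lambda\wedge e_{\lambda^*}$ (the sign arising because $\star(e_\lambda\wedge e_{\lambda^*}) = -e_\lambda\wedge e_{\lambda^*}$); each $\Lambda(e_\mu)$ or $\Lambda(e_\nu)$ with trivial action contributes $\Z/2$ in $h^+$ from the unit and $\Z/2$ in $h^-$ from $e_\mu$ or $e_\nu$, the parity swap coming from the Koszul-degree shift. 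The total $\Z/2$-dimension is $2^{b_\C/2 + b_\R + b_\H}$, matching the exterior algebra on the desired number of generators in $h^-$.

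The main obstacle is twofold: forcing the degeneration of all higher differentials in the spectral sequence, and matching the abstract spectral-sequence generators with the explicit lifts. Both are handled by mapping the $\Z/2$-exterior algebra on $\bar u_\lambda, \bar v_\mu, \bar w_\nu$ into $h^*(\RepC(T)/\keridealC{G})$: the upper bound from the $E_1$-page and the lower bound provided by this ring map (verified by tracing representatives through the edge homomorphism) then force both degeneration and an isomorphism. For the complex-pair factor, the class of $e_\lambda\wedge e_{\lambda^*}$ is indeed represented by any element of $\RepC(T)$ satisfying $u_\lambda + u_\lambda^* = \reduced\lambda\reduced\lambda^*$, recovering \eqref{eq:uvw-relations}, and the analogous chain-level computation produces the required $v_\mu$ and $w_\nu$.
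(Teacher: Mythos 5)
Your approach is a genuinely different route from the paper's. The paper proves this via an elementary devissage: \autoref{prop:h-principal-ideal} handles a single self-dual generator (or dual pair) via the long exact Tate cohomology sequence of $0 \to (\rlambda,\rlambda^*)A \to A \to A/(\rlambda,\rlambda^*) \to 0$, \autoref{cor:h-regular-sequence-additive} iterates this over the regular sequence, and \autoref{lem:h-squares} supplies the multiplicative structure by observing that any class of rank zero squares to zero. You instead run a hyper-Tate spectral sequence off the Koszul resolution, using the $\Z[\Z/2]$-module decomposition of $\RepC(T)$ into the trivial summand $\Z\cdot 1$ plus induced modules to identify the $E_1$-page with $h^*$ of the exterior algebra on the $e$'s. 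The $E_1$-page computation, the module decomposition, and the vanishing of $d_1$ are all correct, and this does give the sharp upper bound $\dim_{\Z/2} h^*(\RepC(T)/\keridealC{G}) \leq 2^{b_\C/2 + b_\R + b_\H}$. What the paper's iterated short-exact-sequence argument buys you is precisely not having to worry about degeneration or about lifting the associated graded to the actual ring: each step of the induction produces a literal direct-sum splitting.

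The gap is in how you close the argument. You propose to force degeneration and match generators by producing a ring map from the $\Z/2$-exterior algebra on $\bar u_\lambda, \bar v_\mu, \bar w_\nu$ into $h^*(\RepC(T)/\keridealC{G})$ and comparing dimensions, but you never verify that such a map exists: for it to be defined you need $\bar u_\lambda^2 = \bar v_\mu^2 = \bar w_\nu^2 = 0$ in the target, and this is not automatic (a priori the square of an odd class could be a nonzero even class, and $E_\infty$ being an exterior algebra only controls the associated graded, not the filtered ring). The missing ingredient is exactly \autoref{lem:h-squares} of the paper, which is a two-line argument: for $u \in \RepC(T)$ of rank zero, $uu^*$ is a self-dual element of rank zero, hence its class in $h^+(\RepC(T)) = \Z/2\cdot\bar 1$ is zero, hence its image in $h^+(\RepC(T)/\keridealC{G})$ is zero; since $\bar u^* = -\bar u = \bar u$ mod $2$, this gives $\bar u^2 = 0$. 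Once you supply this, the rest of your argument closes cleanly: you then have a genuine ring map from an abstract exterior algebra of $\Z/2$-dimension $2^{b_\C/2+b_\R+b_\H}$, surjectivity follows because the $\bar u, \bar v, \bar w$ hit the multiplicative generators of $E_1$ (and hence generate the abutment by a standard filtered-ring argument), and the upper bound from $E_1$ makes it an isomorphism and forces degeneration. I would also caution that ``the lower bound provided by this ring map (verified by tracing representatives through the edge homomorphism)'' is circular as phrased, since extracting a lower bound from the edge homomorphism presupposes knowing which classes survive the spectral sequence; the clean way to avoid this circularity is the surjective-ring-map-plus-upper-bound argument just described, which requires the squares-to-zero step you omitted.
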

\begin{proof}[Proof, assuming \autoref{cor:h-summary}]
We defer the main part of the argument to \autoref{sec:Tate-of-quotients}.
Here, we simply observe that the claim follows directly from \autoref{cor:h-summary} with \( R:=\RepC(G) \) and \( A:=\RepC(T) \).
Indeed, all assumptions needed have already been checked:
\(R\) is a polynomial ring on pairs of mutually dual generators and certain self-dual generators,
\(A\) is a free \(R\)-module of finite rank by Steinberg's Theorem (\autoref{thm:Steinberg}),
and \(h^*(A)\) is trivial.
\end{proof}

\begin{thm}\label{thm:main}
 Let \( G/T\) be a flag variety with \(G\) simply-connected. The total Witt ring of \( G/T \) is an exterior \(\Z/2\)-algebra on certain generators
\begin{alignat*}{3}
   &c_\nu             &&\in \W^1(G/T) \\
   &a_\lambda, b_\mu    &&\in \W^3(G/T)
\end{alignat*}
indexed by the basic representations \(\lambda\in B'_\C\), \(\mu\in B_\R\) and \(\nu\in B_\H\):
\[
    \W^*(G/T) \cong \Lambda(a_\lambda,b_\mu, c_\nu)_{\lambda\in B'_\C, \mu\in B_\R, \nu\in B_\H}
\]
\end{thm}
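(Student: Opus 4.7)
\emph{Proof plan.} The plan is to combine \autoref{cor:main} with \autoref{prop:h(G/T)}, which together identify $\W^*(G/T)$ as a $\Z/2$-graded ring with an exterior $\Z/2$-algebra on the classes of the generators $u_\lambda$, $v_\mu$ and $w_\nu$ of Tate cohomology. Since those generators lie in the odd-degree component $h^-(\RepC(T)/\keridealC{G})$, their preimages under the ring isomorphism \eqref{eq:main-iso} sit in $\W^1(G/T) \oplus \W^3(G/T)$. What remains is to refine the $\Z/2$-grading to the claimed $\Z/4$-grading by deciding, for each generator, whether it lives in $\W^1$ or in $\W^3$. For this I will appeal to the second isomorphism of Bousfield's Lemma, which realises $h^-(G/T)$ as $\W^1(G/T)\cdot\eta^2 \oplus \W^3(G/T)\cdot\eta^2$ via the pair of realification maps $(\bar r_0,\bar r_2)$. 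By diagram \eqref{eq:ra-is-ar}, computing $\bar r_i$ on a class represented by some $x \in \RepC(T)$ reduces to computing $r_i x$ in $\RepR^{2i}(T)/\keridealO^{2i}(G)$.

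The defining relations \eqref{eq:uvw-relations}, combined with the injectivity of $c_0$ on $\RepR(T)$ and of $c_2$ on $\RepH(T)$, pin down the realifications of the generators explicitly: $r_0 u_\lambda = \phi\reduced\lambda$, $r_0 v_\mu = \reduced\mu$, and $r_2 w_\nu = \reduced\nu$. All three right-hand sides are reduced virtual representations of $G$, hence lie in the relevant $\keridealO^{2i}(G)$ by construction of those ideals. Consequently $\bar r_0(\bar u_\lambda)$ and $\bar r_0(\bar v_\mu)$ vanish in $\W^1(G/T)\cdot\eta^2$, forcing $u_\lambda$ and $v_\mu$ to correspond to pure $\W^3$-classes (the generators $a_\lambda$ and $b_\mu$); similarly $\bar r_2(\bar w_\nu)$ vanishes in $\W^3(G/T)\cdot\eta^2$, forcing $w_\nu$ to correspond to a pure $\W^1$-class (the generator $c_\nu$).

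The substantive content is therefore encapsulated in \autoref{prop:h(G/T)}, whose proof is deferred to \autoref{sec:Tate-of-quotients}; this is the step I expect to be the main obstacle. Once that proposition is in place, the theorem follows by assembling three ingredients already set up in the paper --- the ring isomorphism of \autoref{cor:main}, the exterior-algebra description of $h^*$ from \autoref{prop:h(G/T)} and the realification half of Bousfield's Lemma --- together with the short calculations above. The only delicacy is to track which Witt group each generator lands in, and the realification criterion handles this uniformly.
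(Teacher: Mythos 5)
Your proposal is correct and follows exactly the same route as the paper: invoke \autoref{cor:main} together with \autoref{prop:h(G/T)} to obtain the $\Z/2$-graded exterior algebra structure, then refine to the $\Z/4$-grading by computing $r_0$ and $r_2$ of the generators via \eqref{eq:uvw-relations}, the injectivity of $c_0$ and $c_2$, and the compatibility diagram \eqref{eq:ra-is-ar}, concluding from the realification half of Bousfield's Lemma. The paper's proof differs only in superficial phrasing.
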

\begin{proof}
Let \(u_\lambda\), \(v_\mu\) and \(w_\nu\) be the generators of \autoref{prop:h(G/T)}, and let \(a_\lambda\), \(b_\mu\) and \(c_\nu\) be their preimages under the isomorphism \eqref{eq:main-iso} (with \(H=T\)), that is, the preimages of the elements \(\baraC u_\lambda\), \(\baraC v_\mu\) and \(\baraC w_\nu\) under \(\bar c\).
Since \(u_\lambda\), \(v_\mu\) and \(w_\nu\) are elements of \(h^-(-)\), we have \(a_\lambda,b_\mu,c_\nu\in\W^{1,3}(G/T)\).

It remains to show that these elements remain homogeneous with respect to the \( \Z/4\)-grading of the total Witt group, of degrees \(3\), \(3\) and \(1\), respectively. To see this, we first observe that the second isomorphism
\[
   \mm{\bar r_0\\\bar r_2}\colon h^-(G/T) \xrightarrow{\cong} c\under\KO^0(G/T) \oplus c\under\KO^4(G/T)
\]
of Bousfield's Lemma preserves the grading of \(h^-(G/T)\) corresponding to the decomposition  into \(\W^1(G/T)\oplus\W^3(G/T)\). It therefore suffices to show that:
\begin{align}\label{eq:mainproof1}
 r_0(\baraC u_\lambda) &=0 \quad\text{in } \KO^0(G/T) \notag \\
 r_0(\baraC v_\mu)    &=0 \quad\text{in } \KO^0(G/T)         \\
 r_2(\baraC w_\nu)    &=0 \quad\text{in } \KO^4(G/T)  \notag
\end{align}

In order to obtain these relations, we note that the injectivity of the complexification maps \(c_0\) and \(c_2\) on representations of \(G\) allows us to rewrite equations \eqref{eq:uvw-relations} as
\begin{alignat*}{4}
  r_0 u_\lambda  &= \phi(\reduced \lambda_i) &\quad&\text{in } \RepR^0(T)\\
  r_0 v_\mu     &= \reduced \mu_i            &\quad&\text{in } \RepR^0(T)\\
  r_2 w_\nu     &= \reduced \nu_i            &\quad&\text{in } \RepR^4(T)
\end{alignat*}
Since all representations on the right-hand sides are restrictions of representations of \(G\),
\begin{equation*}
\left.
\begin{alignedat}{4}
   r_0 u_\lambda&=0  \\
   r_0 v_\mu   &=0  \\
   r_2 w_\nu   &=0
\end{alignedat}
\right\}\text{ in } \factor{\RepRH(T)}{\keridealRH{G}}.
\end{equation*}
We now apply the map \(\baraO\) defined in \autoref{sec:Hodgkin} to these equations. By \eqref{eq:ra-is-ar} this yields the desired relations \eqref{eq:mainproof1}.
\end{proof}

Finally, we indicate how to translate the above computation to the algebraic setting of a full flag variety \(G/B\) over an algebraically closed field of characteristic not two.

\begin{proof}[Proof of {\theoremprime}]
  In the algebraic setting, the role of the topological \(\K\)-group is played by the algebraic \(\K\)-group \(\K_0(G/B)\), while the roles of the \(\KO\)-groups of even degrees \(\KO^{2i}\) are played by Balmer and Walter's Grothendieck-Witt groups \(\GW^i(G/B)\) \citelist{\cite{Walter:TGW}}.  The complexification maps correspond to the forgetful maps \(\GW^i(G/B)\to\K_0(G/B)\), the realification maps correspond to the hyperbolic maps \(\K_0(G/B)\to\GW^i(G/B)\), and the cokernels of the hyperbolic maps are Balmer's Witt groups \(\W^i(G/B)\).  Moreover, over an algebraically closed field, Bousfield's Lemma (\autoref{sec:Witt-is-Tate}) holds as before \cite{Me:Vanishing}*{Theorem~2.3}.

  Similarly, we can consider the algebraic \(\K\)-group \(\K_0(\cat Rep(G))\) and the \GrothendieckWitt and Witt groups \(\GW^i(\cat Rep(G))\) and \(\W^i(\cat Rep(G))\) of the category \(\cat Rep(G)\) of finite-dimensional representations of an affine algebraic group \(G\) in place of the groups \(\RepC(G)\), \(\RepR^{2i}(G)\) and \(\RepR^{2i+1}(G)\).  Then again \(h^*(G) = \W^*(G)\) for an affine algebraic group over an algebraically closed field \cite{Me:Vanishing}*{Theorem~2.2}.

Finally, as already mentioned in \autoref{sec:Hodgkin}, Hodgkin's Theorem also holds in the algebraic setting \cite{Panin:TwistedFlags}: for any simply-connected semi-simple algebraic group \(G\), and for any parabolic subgroup \(P\subset G\), there is a ring isomorphism
\[
\K_0(G/P) \cong \factor{\K_0(\cat Rep(P))}{\ideal a},
\]
where \(\ideal a\subset \K_0(\cat Rep(P))\) is the ideal generated by restrictions of rank zero classes in \(\K_0(\cat Rep(G))\).
This allows us to carry over the topological computations described above.
\end{proof}

\section{Tate cohomology of some quotient rings}\label{sec:Tate-of-quotients}
The aim of this section is to complete our computations by proving \autoref{prop:h(G/T)}. This is accomplished in \autoref{cor:h-summary}.

We first need some terminology.
We define a \deffont{\star-module} to be an abelian group equipped with an involution \(\star\) which is a group isomorphism, a \deffont{\star-ring} to be a commutative unital ring equipped with an involution \(\star\) which is a ring isomorphism, and a \deffont{\star-ideal} in a \star-ring \( {A} \) to  be an ideal preserved by the involution on \(A\).
We will say that a {\star}-ideal \(\ideal a\subset A\) is \deffont{generated} by certain elements \(a_1,\dots,a_n\) and write
\[
  \ideal a = (a_1,\dots,a_n)
\]
if \(\ideal a\) is generated as an ideal in the usual sense by the elements \(a_1,a_1^\star,\dots,a_n,a_n^\star\).

An element \(x\) of a \star-module \(M\) will be called \deffont{self-dual} if \(x=x^*\) and \deffont{anti-self-dual} if \(x^*=-x^*\). The Tate cohomology of \(M\) may be defined as in \autoref{sec:Witt-is-Tate}:
\begin{align*}
   h^+(M)&:=\frac{\ker(\id-\;\star)}{\im(\id+\;\star)}\\
   h^-(M)&:=\frac{\ker(\id+\;\star)}{\im(\id-\;\star)}
\end{align*}
In other words, \(h^+(M)\) is the quotient of the subgroup of self-dual elements by those elements which are ``obviously self-dual'', and similarly \(h^-(M)\) is a quotient of the subgroup of anti-self-dual elements.
For a \star-ring \(A\), the direct sum
\[
   h^*(A):= h^+(A)\oplus h^-(A)
\]
inherits a commutative ring structure, so we will refer to \(h^*(A)\) as the \deffont{Tate cohomology ring} of \(A\). It is easily checked that this ring is \(\Z/2\)-graded, \ie that:
\begin{align*}
h^+(A)\cdot h^+(A)&\subset h^+(A)\\
h^+(A)\cdot h^-(A)&\subset h^-(A)\\
h^-(A)\cdot h^-(A)&\subset h^+(A)
\end{align*}

We begin with a simple Lemma relating the Tate cohomology of principal ideals in a \(\star\)-ring \(A\) to the Tate cohomology of \(A\).
\newcommand{\rlambda}{\reduced \lambda}
\newcommand{\rmu}{\reduced \mu}
\begin{prop}\label{prop:h-principal-ideal}
 Let \( {A} \) be a \star-ring.
 \begin{itemize}
 \item If \( \rmu \in A \) is a self-dual element which is not a zero divisor, then multiplication by \( \rmu \) induces a graded isomorphism
 \[
    h^*(A) \xrightarrow[\cdot \rmu]{\cong} h^*(\rmu A).
 \]
 If, in addition, the class of \( \rmu \) in \( h^+(A)\) is trivial, then
 \[ h^*(A/(\rmu)) \cong h^*(A) \oplus \bar u\cdot h^*(A), \]
 where \( \bar u \) may be represented by any element of \( A \) with the property that \( u+u^* = \rmu \).

 \item If \( (\rlambda, \rlambda^*) \) is a regular sequence in \( A \), then multiplication by the product \( \rlambda\rlambda^* \) induces a graded isomorphism
 \[
    h^*(A) \xrightarrow[\cdot \rlambda\rlambda^*]{\cong} h^*((\rlambda,\rlambda^*) A)
 \]
 If, in addition, the class of \( \rlambda\rlambda^*\) in \( h^+(A) \) is trivial, then
 \[ h^*(A/(\rlambda,\rlambda^*)) \cong h^*(A) \oplus \bar u\cdot h^*(A),\]
 where \( \bar u \) may be represented by any element of \( A \) with the property that \( u+u^* = \rlambda\rlambda^* \).
  \end{itemize}
\end{prop}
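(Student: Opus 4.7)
The plan is to derive both parts from the long exact sequence in Tate cohomology attached to suitable short exact sequences of \(\star\)-modules. In each case I would first show that multiplication by the self-dual element gives an isomorphism from \(h^*(A)\) to the Tate cohomology of the relevant ideal, and then, using the triviality hypothesis, split the resulting long exact sequence for the quotient ring to obtain the claimed decomposition, with the splitting given explicitly by multiplication by \(\bar u\).

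For the first bullet, self-duality of \(\rmu\) and non-zero-divisibility make multiplication by \(\rmu\) into a \(\star\)-equivariant injection \(A \hookrightarrow A\) that identifies \(A\) with \(\rmu A\) as \(\star\)-modules, yielding the isomorphism \(h^*(A) \xrightarrow{\cdot\rmu} h^*(\rmu A)\) immediately. For the second assertion I would apply the Tate long exact sequence to \(0 \to \rmu A \to A \to A/(\rmu) \to 0\); after transporting along the previous isomorphism, the maps \(h^\pm(A) \to h^\pm(A)\) in the resulting six-term sequence become multiplication by the class of \(\rmu\) in \(h^+(A)\), and under the triviality hypothesis they vanish. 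The sequence therefore breaks into two short exact pieces
\[
0 \to h^\pm(A) \xrightarrow{\pi} h^\pm(A/(\rmu)) \xrightarrow{\delta} h^\mp(A) \to 0.
\]
A direct computation will then show that \(x \mapsto \bar u \cdot \pi(x)\) sections \(\delta\): for self-dual \(x\) one has \(ux + (ux)^* = (u + u^*)x = \rmu x\), so the connecting homomorphism returns \(x\), and analogously in the anti-self-dual case. This produces the splitting \(h^*(A/(\rmu)) \cong \pi h^*(A) \oplus \bar u\cdot\pi h^*(A)\), matching the claim.

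For the second bullet, the new input is a computation of \(h^*\!\bigl((\rlambda,\rlambda^*)A\bigr)\). Writing \(I = (\rlambda,\rlambda^*)A\) and \(J = \rlambda\rlambda^* A\), regularity of the sequence \((\rlambda,\rlambda^*)\) yields the intersection formula \(\rlambda A \cap \rlambda^* A = \rlambda\rlambda^* A\), so \(I/J\) decomposes as a direct sum \(\rlambda A/J \oplus \rlambda^* A/J\) on which the involution swaps the two summands. Such a \emph{hyperbolic} \(\star\)-module has trivial Tate cohomology (every self-dual element \((x,x^*)\) equals \((1+\star)(x,0)\), and similarly for anti-self-dual elements), so the Tate sequence of \(0 \to J \to I \to I/J \to 0\) produces an isomorphism \(h^*(J) \xrightarrow{\cong} h^*(I)\). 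Composing with the first-bullet isomorphism \(h^*(A) \xrightarrow{\cdot\rlambda\rlambda^*} h^*(J)\) gives part (a), and the remainder of the proof repeats the argument from the first bullet verbatim, with \(\rmu\) replaced by \(\rlambda\rlambda^*\).

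The main point to be careful about is the identification of the connecting map with multiplication by the relevant self-dual class, together with the parallel check that \(\bar u \cdot \pi\) actually splits it; both are short, but the signs in the anti-self-dual case require some attention. Regularity of \((\rlambda, \rlambda^*)\) enters only through the intersection identity \(\rlambda A \cap \rlambda^* A = \rlambda\rlambda^* A\), which is what makes \(I/J\) hyperbolic and thereby drives the entire second bullet.
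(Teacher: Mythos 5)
Your proof is correct, and it takes a genuinely different route from the paper for the key isomorphism in the second bullet. The paper establishes $h^*(A)\xrightarrow{\cdot\rlambda\rlambda^*}h^*\bigl((\rlambda,\rlambda^*)A\bigr)$ by a direct element chase: it verifies injectivity and surjectivity separately, in each case writing an element of $(\rlambda,\rlambda^*)A$ as $\rlambda y_1+\rlambda^*y_2$ and using regularity to solve a congruence modulo $\rlambda$. You instead factor the map through $J=\rlambda\rlambda^*A$ and $I=(\rlambda,\rlambda^*)A$: the first bullet gives $h^*(A)\cong h^*(J)$, and regularity enters only through the identity $\rlambda A\cap\rlambda^*A=\rlambda\rlambda^*A$, which shows that $I/J\cong\rlambda A/J\oplus\rlambda^*A/J$ is an induced $\Z[\Z/2]$-module with the involution swapping the summands, hence Tate-acyclic, so $h^*(J)\cong h^*(I)$. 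This replaces two element chases by one application of a single general principle. You also make explicit the splitting of the short exact sequences $0\to h^\pm(A)\to h^\pm(A/\ideal a)\to h^\mp(A)\to 0$ by the map $x\mapsto\bar u\cdot\pi(x)$, verifying directly that the connecting homomorphism returns $x$ (your sign check in the anti-self-dual case works out as indicated, since $u^*\equiv-u$ in the quotient). The paper leaves this splitting implicit, stating only that the long exact sequence breaks into short exact sequences from which the result follows. Both arguments are sound and use the regularity hypothesis at essentially the same point; yours is somewhat more structural.
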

\begin{proof}
We prove the second part; the proof of the first is analogous.
Since \( \rlambda\rlambda^* \) is self-dual, the map
 \[
    h^*(A) \xrightarrow{\cdot \rlambda\rlambda^*} h^*((\rlambda,\rlambda^*)A)
 \]
is well-defined.
For injectivity, suppose first that \( \rlambda\rlambda^* x = 0 \) in \( h^+((\rlambda,\rlambda^*)A ) \) for some \( x \in h^+(A) \). Then
\(  \rlambda\rlambda^*x = y + y^*  \) for some \( y \in (\rlambda,\rlambda^*)A \). Write \( y = \rlambda y_1 + \rlambda^* y_2 \) for certain \( y_1, y_2 \in A \). Then
\[
     \rlambda\rlambda^* x = \rlambda(y_1 + y_2^*) + \rlambda^*(y_1^*+y_2).
\]
In particular, \( \rlambda^*(y_1^* + y_2) = 0\) in \( A/(\rlambda) \). By regularity of the sequence \( (\rlambda, \rlambda^*) \), this implies that \(y_1^* + y_2 = 0\) in \( A/(\rlambda) \). So \( y_1^*+y_2 = \rlambda z \) in \( A \), for some \( z \in A \).
Thus,
\[
    \rlambda\rlambda^* x = \rlambda\rlambda^*(z + z^*).
\]
By assumption, \(\rlambda\) is not a zero-divisor in \( A \). Its dual \( \rlambda^* \) cannot be a zero-divisor in \( A \) either. We may therefore conclude that \( x = z+z^* \), which shows that \( x = 0 \) in \( h^+(A) \). The case when \( x \in h^-(A) \) works analogously.

For surjectivity, suppose that \( y \in (\rlambda,\rlambda^*)A \) is an arbitrary self-dual element. Writing \( y = \rlambda y_1 + \rlambda^* y_2\) as in the proof of injectivity, we see that
\[
   \rlambda(y_1-y_2^*) + \rlambda^*(y_2-y_1^*) = 0.
\]
It follows that \( y_2-y_1^* = 0 \) in  \( A/(\rlambda) \), so \( y_2-y_1^* = \rlambda z \) for some \( z \in A \). Thus, we find that \( y = \rlambda y_1 + \rlambda^* y_1^* + \rlambda\rlambda^*z \). So \( y \) is equivalent to \(\rlambda\rlambda^*z\) in \( h^+((\rlambda,\rlambda^*)A) \).  Moreover, \(z\) defines an element in \(h^+(A)\):  the fact that \(y\) is self-dual implies that \(\rlambda\rlambda^* z^* = \rlambda\rlambda^* z\) and thus \(z = z^*\) as neither \(\rlambda\) nor \(\rlambda^*\) is a zero divisor.  Preimages of elements of \( h^-((\rlambda,\rlambda^*)A) \) may be found in an analogous fashion.

Finally, consider the Tate cohomology sequence associated with the short exact sequence of \star-modules
\[
   0 \rightarrow (\rlambda, \rlambda^*)A \rightarrow A \rightarrow A/(\rlambda,\rlambda^*) \rightarrow 0
\]
The assumption on the class of \( \rlambda\rlambda^* \) in \( h^+(A) \) shows that the map induced by the inclusion of \( (\rlambda,\rlambda^*)A \) into \( A \) is zero on cohomology. Thus, the cohomology sequence splits into short exact sequences from which the stated result for \( h^*(A/(\rlambda,\rlambda^*)) \) follows.
\end{proof}

\begin{cor}\label{cor:h-regular-sequence-additive}
Let \( {A} \) be a \star-ring. Let \( \ideal a := (\rlambda_1, \dots, \rlambda_k, \rmu_{k+1}, \dots \rmu_{k+l}) \) be a \star-ideal in \( A \) such that:
\begin{itemize}
\item \( \rlambda_1, \rlambda_1^*, \dots, \rlambda_k, \rlambda_k^*, \rmu_{k+1}, \dots \rmu_{k+l} \) is a regular sequence in \( A \).
\item The generators \( \rmu_{k+1}, \dots, \rmu_{k+l} \) are self-dual.
\item The class of each of the elements \( \rlambda_i\rlambda_i^* \) and \( \rmu_i \) is trivial in \( h^+(A) \).
\end{itemize}
Then we have an isomorphism of \( h^*(A) \)-modules
\[
    h^*(A/\ideal a) \cong h^*(A)\otimes_{\Z/2} \Lambda(\bar u_1,\dots,\bar u_{k+l}),
\]
where the generators \( \bar u_i \) may be represented by elements \( u_i \in A \) such that
  \[
     u_i + u_i^* = \begin{cases}
                   \rlambda_i\rlambda^*_i & \text{for \( i=1,\dots, k \) } \\
                   \rmu_i                & \text{for \( i=k+1,\dots, k+l\) }
                   \end{cases}
  \]
\end{cor}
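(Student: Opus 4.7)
The plan is to prove the corollary by induction on $n := k+l$, using \autoref{prop:h-principal-ideal} at the inductive step. The base case $n=0$ is trivial, since $\ideal a = 0$ and both sides reduce to $h^*(A)$.

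For the inductive step, I would peel off the last generator of $\ideal a$. Let $\ideal a'$ denote the \star-ideal generated by all but the last generator (or last dual pair), so that $A/\ideal a \cong (A/\ideal a')/(\bar a)$, where $\bar a$ is either the image of the single self-dual generator $\rmu_{k+l}$ (if $l>0$) or of the dual pair $(\rlambda_k, \rlambda_k^*)$ (if $l=0$, $k>0$). By the inductive hypothesis,
\[
   h^*(A/\ideal a') \cong h^*(A) \otimes_{\Z/2} \Lambda(\bar u_1, \dots, \bar u_{n-1})
\]
as $h^*(A)$-modules. Applying the first or second part of \autoref{prop:h-principal-ideal} according to the case then gives
\[
   h^*(A/\ideal a) \cong h^*(A/\ideal a') \oplus \bar u_n \cdot h^*(A/\ideal a'),
\]
which combined with the inductive hypothesis yields the desired free $h^*(A)$-module of rank $2^n$ with exterior-algebra basis.

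Before invoking the proposition at each stage, I need to check that its hypotheses continue to hold for the quotient $A/\ideal a'$. The regular-sequence assumption on the full list $\rlambda_1, \rlambda_1^*, \dots, \rmu_{k+l}$ in $A$ ensures that the image of $\rmu_{k+l}$ remains a non-zero-divisor in $A/\ideal a'$ (respectively, that the image of the pair $(\rlambda_k, \rlambda_k^*)$ remains a regular sequence); this is the standard fact that the tail of a regular sequence stays regular in the quotient by its head. The triviality of the relevant class in $h^+(A/\ideal a')$ is witnessed by the image of the element $u_n \in A$ supplied by the third hypothesis on $A$, since the relation $u_n + u_n^* = \rmu_{k+l}$ (or $= \rlambda_k\rlambda_k^*$) is preserved under any quotient map. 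The element $u_n$ is also exactly the witness required by \autoref{prop:h-principal-ideal} to define the extra exterior generator.

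Neither verification presents a genuine obstacle; the conceptual content lies entirely in \autoref{prop:h-principal-ideal}, and the corollary iterates the single-generator splitting. The one point I would monitor carefully is the compatibility of the successive module structures: the splitting produced by \autoref{prop:h-principal-ideal} is most naturally phrased as an $h^*(A/\ideal a')$-module decomposition, and I need the resulting isomorphism to be compatible with the $h^*(A)$-action (via the structure map $h^*(A) \to h^*(A/\ideal a')$) so that iterating the construction gives a genuine tensor product with $\Lambda(\bar u_1,\dots,\bar u_n)$ over $\Z/2$. This compatibility is immediate from naturality of the Tate cohomology long exact sequence used in the proof of the proposition.
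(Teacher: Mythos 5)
Your argument is essentially the paper's own: an induction on the number of generators, applying \autoref{prop:h-principal-ideal} at each step, with the observation that the witness $u_i$ for the triviality of $\rlambda_i\rlambda_i^*$ (or $\rmu_i$) in $h^+(A)$ descends to any \star-quotient. The paper builds up the ideal one generator at a time from the front rather than peeling from the back, but this is purely cosmetic; your additional remarks on the persistence of regularity and on compatibility of the $h^*(A)$-module structures make explicit points the paper leaves implicit.
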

\begin{proof}
Write \( \ideal a_j\) for the \star-ideal in \( A \) generated by the first \( j \) generators, \ie by \( \rlambda_1,\dots, \rlambda_j \) or  \( \rlambda_1, \dots, \rlambda_k, \dots, \rmu_j \), respectively. Since \( \rlambda_i\rlambda_i^* \) and \( \rmu_i \) are trivial in \( h^+(A) \), they are a fortiori trivial in \( h^+(A/\ideal b) \) for any \star-quotient \( A/\ideal b \) of \( A \). More precisely, if \( u \in A \) has the property that  \( u+u^* = \rlambda_i\rlambda_i^*\) or \( u+u^* = \rmu_i \), then the class of \( u \) in any \star-quotient of \( A \) still has this property. Thus, we may proceed by induction. The case when \( \ideal a \) has zero generators is trivial. To pass from \( j \) to \( j+1 \) generators, we apply  Prop.~\ref{prop:h-principal-ideal} to the classes of \( \rlambda_{j+1},\rlambda_{j+1}^* \) or \( \rmu_{j+1} \) in \( A/\ideal a_j \).
\end{proof}
We now analyse a special case in which we can show that the additive isomorphism in \autoref{cor:h-regular-sequence-additive} is in fact an isomorphism of rings. By an \deffont{augmented \star-ring} we will mean a \star-ring \( A \) together with a surjective \star-morphism \(\rank\colon A \twoheadrightarrow \Z \), where \( \Z \) is equipped with the trivial involution. The image of an element of \( A \) under the augmentation will be referred to as the \deffont{rank} of that element. If \( \ideal a \) is a \star-ideal in \( A \) generated by elements of rank zero, then the quotient \( A/\ideal a \) again has a canonical augmentation. Moreover, an augmentation of \( A \) descends to an augmentation \( \bar\rank\colon h^*(A) \twoheadrightarrow \Z/2 \) mapping elements of \( h^+(A) \) to their rank modulo two and elements of \( h^-(A) \) to zero.
\begin{lem}\label{lem:h-squares}
Let \( {A} \) be an augmented \star-ring with \( h^*(A) \) trivial (\ie such that \( h^+(A) = \Z/2\cdot\bar{1} \) and \( h^-(A) = 0 \)). Let \( \ideal a \) be any \star-ideal in \( A \) generated by elements of rank zero. Then all elements of rank zero in \( h^*(A/\ideal a) \) square to zero.
\end{lem}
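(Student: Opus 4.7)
My plan is to pick a lift $\tilde x \in A$ of the given rank-zero class $\bar x \in h^*(A/\ideal a)$ and then show, by an explicit manipulation, that $\tilde x^2$ lies in $\im(\id + \star) + \ideal a$; this immediately gives $\bar x^2 = 0$ in $h^+(A/\ideal a)$.  Since $h^*(A/\ideal a)$ is $\Z/2$-graded and the conclusion is bilinear, it is enough to treat homogeneous $\bar x$.  The manipulation will be based on writing $\epsilon = +1$ when $\bar x \in h^+(A/\ideal a)$ and $\epsilon = -1$ when $\bar x \in h^-(A/\ideal a)$, so that $\tilde x - \epsilon\tilde x^* \in \ideal a$ in either case, and then exploiting the trivial identity
\[
  \tilde x^2 \;=\; \epsilon\,\tilde x\tilde x^* \;+\; \tilde x\bigl(\tilde x - \epsilon\tilde x^*\bigr).
\]
The second summand manifestly lies in $\ideal a$, so everything hinges on trivializing $\epsilon\tilde x\tilde x^*$ modulo $\im(\id+\star)$.

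The key step will be to argue that $\tilde x\tilde x^*$ already lies in $\im(\id+\star)$ inside $A$.  Since it is self-dual, it represents a class in $h^+(A)$; and since $h^+(A)=\Z/2\cdot\bar 1$ with $\bar 1$ of odd rank, the rank-modulo-two map $h^+(A)\xrightarrow{\cong}\Z/2$ is an isomorphism.  It therefore suffices to check that $\rank(\tilde x\tilde x^*) = \rank(\tilde x)^2$ is even, \ie that $\rank(\tilde x)$ is even.  In the $h^+$-case this is immediate from the rank-zero hypothesis on $\bar x$.  In the $h^-$-case I will use that $\tilde x+\tilde x^*\in\ideal a$ together with the fact that every element of $\ideal a$ has rank zero (the ideal being generated by rank-zero elements and the augmentation being a ring map), so that $2\rank(\tilde x)=0$ and hence $\rank(\tilde x)=0$.

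Writing $\tilde x\tilde x^* = z + z^*$ for some $z \in A$ and substituting into the identity above will then display $\tilde x^2 = \epsilon z + (\epsilon z)^* + a$ with $a \in \ideal a$, exactly as required.  I do not anticipate a genuine obstacle here: once one notices that the smallness of $h^+(A)$ forces every self-dual element of even rank into $\im(\id+\star)$, the rest is pure bookkeeping split into the two parity cases above.
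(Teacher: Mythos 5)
Your argument is correct and is essentially the paper's proof: the central observation in both is that the self-dual element $\tilde x\tilde x^*$ represents a class in $h^+(A)\cong\Z/2$ which must vanish because its rank is even, and this vanishing is then pushed down to $h^+(A/\ideal a)$ to kill $\bar x^2$. You merely spell out in more detail what the paper compresses into ``for rank reasons'' and ``since $\bar u$ was in $h^*(A/\ideal a)$ to begin with,'' handling the $h^+$ and $h^-$ cases separately and making the reduction to homogeneous elements explicit.
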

\begin{proof}
Take \( \bar{u} \in h^*(A/\ideal{a}) \) such that \( \bar\rank(\bar u) = 0\). Pick a representative \( u \in A \). Then \( uu^* \) defines an element in \( h^+(A) \), which, for rank reasons, must be zero. It follows that, a fortiori, \( \bar{uu^*} = 0 \) in \( h^+(A/\ideal{a}) \). Since \( \bar{u} \) was in \( h^*(A/\ideal{a}) \) to begin with, we have \( \bar{uu^*} = \bar{u}^2 \) in \( h^*(A/\ideal{a}) \).
\end{proof}

\begin{cor}\label{cor:h-regular-sequence-multiplicative}
Let \( {A} \) be an augmented \star-ring containing a \star-ideal \( \ideal a \) generated by elements \(\rlambda_1, \dots, \rlambda_k, \rmu_{k+1}, \dots \rmu_{k+l} \) of rank zero. Assume that:
\begin{itemize}
\item \( \rlambda_1, \rlambda_1^*, \dots, \rlambda_k, \rlambda_k^*, \rmu_{k+1}, \dots \rmu_{k+l} \) is a regular sequence in \(A\).
\item The generators \( \rmu_{k+1}, \dots, \rmu_{k+l} \) are self-dual.
\item  \( h^*(A) \) is trivial.
\end{itemize}
Then we have an isomorphism of \emph{rings}
\[
   h^*(A/\ideal a) \cong \Lambda(\bar u_1, \dots, \bar u_{k+l}).
\]
In particular, \(\bar u_i^2 = 0 \) in \( h^*(A/\ideal a) \) for all \( i \). Representatives of the generators \( \bar u_i \) may be chosen as in \autoref{cor:h-regular-sequence-additive}.
\end{cor}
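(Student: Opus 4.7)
The plan is to derive this multiplicative refinement directly from the two preceding results: the additive description in \autoref{cor:h-regular-sequence-additive} already identifies \(h^*(A/\ideal a)\) with an exterior algebra as a module, so only the compatibility with the ring structure needs to be established. Since \(h^*(A)\) is trivial by assumption, the additive isomorphism of \autoref{cor:h-regular-sequence-additive} specialises to a \(\Z/2\)-linear isomorphism
\[
   h^*(A/\ideal a) \cong \Lambda(\bar u_1, \dots, \bar u_{k+l}),
\]
and, by inspection of how that isomorphism is built out of iterated applications of \autoref{prop:h-principal-ideal} (multiplication by the successive \(\bar u_i\)), the exterior monomial \(\bar u_{i_1}\cdots \bar u_{i_m}\) with \(i_1<\dots<i_m\) corresponds to the actual product of the classes \(\bar u_{i_j}\) in \(h^*(A/\ideal a)\).

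The key multiplicative input is that each \(\bar u_i\) squares to zero. To apply \autoref{lem:h-squares} I first check that each representative \(u_i\in A\) has rank zero. Applying the augmentation to the defining relation \(u_i + u_i^* = \rlambda_i\rlambda_i^*\) (respectively \(u_i + u_i^* = \rmu_i\)) and using that the augmentation commutes with \(\star\) while the generators \(\rlambda_i,\rmu_i\) have rank zero, one obtains \(2\rank(u_i)=0\) in \(\Z\), so \(\rank(u_i) = 0\). \autoref{lem:h-squares} then yields \(\bar u_i^2 = 0\) in \(h^*(A/\ideal a)\) for every \(i\).

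To conclude, I invoke the fact that Tate cohomology is always \(2\)-torsion: a self-dual \(x\) satisfies \(2x = (\id+\star)(x)\) and an anti-self-dual \(x\) satisfies \(2x = (\id-\star)(x)\). Hence \(h^*(A/\ideal a)\) is a commutative \(\Z/2\)-algebra, and the anti-commutation relations in an exterior algebra reduce to plain commutativity, which is automatic. Combined with the vanishing of squares, the universal property of the exterior algebra produces a well-defined ring homomorphism \(\Lambda(\bar u_1,\dots,\bar u_{k+l}) \to h^*(A/\ideal a)\); this map sends monomials to monomials and is therefore bijective by the additive count already obtained. I do not foresee any serious obstacle: the heavy lifting happens in \autoref{cor:h-regular-sequence-additive}, and \autoref{lem:h-squares} was tailored precisely for this multiplicative upgrade, so the proof is essentially a matter of assembling the pieces and checking that the rank-zero hypothesis of the squares lemma holds.
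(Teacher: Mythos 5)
Your proposal is correct and follows essentially the same route as the paper: apply \autoref{cor:h-regular-sequence-additive} for the additive structure and \autoref{lem:h-squares} for the multiplicative refinement. The paper's own proof is considerably terser --- it simply observes that triviality of \(h^*(A)\) and rank-zero-ness of the \(\rlambda_i\rlambda_i^*\) and \(\rmu_i\) force their classes in \(h^+(A)=\Z/2\cdot\bar 1\) to vanish, so that \autoref{cor:h-regular-sequence-additive} applies, and then invokes \autoref{lem:h-squares} without comment. Your proof spells out the details the paper leaves implicit: the rank-zero check on the \(u_i\) (which could alternatively be obtained by noting that each \(\bar u_i\) lands in \(h^-\), and the induced augmentation kills \(h^-\) by definition), the commutativity of the target as a \(\Z/2\)-algebra, and the identification of the abstract module generators with actual products in \(h^*(A/\ideal a)\) through the inductive use of \autoref{prop:h-principal-ideal}. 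All of this is correct and amounts to the same argument.
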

\begin{proof}
Since \( \rlambda_i\rlambda_i^*\) and \( \rmu_i\) have rank zero, they must be trivial in \( h^+(A) \). We may thus apply \autoref{cor:h-regular-sequence-additive} to obtain the module structure of \( h^*(A) \) and Lemma~\ref{lem:h-squares} to obtain the algebra structure.
\end{proof}

Finally, we specialize to the situation of \autoref{prop:h(G/T)}. Let \( {R} \) be an augmented \star-ring, and let \( {A} \) be an augmented \star-algebra over \({R}\). That is, \(A\) is simultaneously a \star-ring and an augmented algebra over \( {R} \) such that \((r\cdot a)^\star = r^\star a^\star\) for all \(r\in R\) and \(a\in A\). Given an arbitrary element \(\lambda\) in \( R\) (or in \(A\)), we write \(\reduced\lambda\) for the element \(\lambda-\rank(\lambda)\) of rank zero.

\begin{cor}\label{cor:h-summary}
  Let \( {A} \) be an augmented algebra over an augmented \star-ring \( {R} \). Suppose that:
  \begin{itemize}
  \item \( R \) is a polynomial ring of the form \( R= \Z[\lambda_1, \lambda_1^*, \dots, \lambda_k, \lambda_k^*, \mu_{k+1}, \dots \mu_{k+l}]\) with \( \mu_{k+1}, \dots, \mu_{k+l} \) self-dual.
  \item \( A \) is a free \( R \)-module of finite rank.
  \item \( h^*(A) \) is trivial, \ie \( h^+(A) = \Z/2\cdot\bar{1} \) and \( h^-(A) = 0 \).
  \end{itemize}
  Let \(\ideal a\subset A\) be the ideal generated by all elements of \(R\) of rank zero.
  Then
  \[
     h^*\left(\factor{A}{\ideal a}\right) = \Lambda(\bar u_1, \dots, \bar u_{k+l}),
  \]
  where the classes \( \bar u_i \) may be represented by elements \( u_i \in A \) such that
  \[
     u_i + u_i^* = \begin{cases}
                   \reduced\lambda_i\reduced\lambda^*_i & \text{for \( i=1,\dots, k \) } \\
                   \reduced\mu_i                        & \text{for \( i=k+1,\dots, k+l\) }
                   \end{cases}
  \]
\end{cor}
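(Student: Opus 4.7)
The plan is to reduce this directly to \autoref{cor:h-regular-sequence-multiplicative} by identifying \(\ideal a\) as the \(\star\)-ideal in \(A\) generated by a regular sequence that matches the hypotheses of that corollary.

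First, I would unpack the description of \(\ideal a\).  Since \(R\) is a polynomial ring, its augmentation sends each generator \(\lambda_i, \lambda_i^*, \mu_i\) to its rank, so the augmentation ideal of \(R\) is generated as an ordinary ideal by \(\reduced\lambda_1,\reduced\lambda_1^*,\dots,\reduced\lambda_k,\reduced\lambda_k^*,\reduced\mu_{k+1},\dots,\reduced\mu_{k+l}\).  Consequently the ideal \(\ideal a\subset A\) generated by all rank-zero elements of \(R\) is precisely the \(\star\)-ideal generated by this list.  The elements \(\reduced\mu_i\) remain self-dual since \(\mu_i\) is self-dual and its rank is an integer, while the pairs \((\reduced\lambda_i,\reduced\lambda_i^*)\) are swapped by the involution.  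This matches the form of the ideal considered in \autoref{cor:h-regular-sequence-multiplicative}.

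Next, I would verify the regular sequence hypothesis.  Inside \(R\), a linear change of variables identifies \(\reduced\lambda_1,\reduced\lambda_1^*,\dots,\reduced\mu_{k+l}\) with the polynomial generators themselves, so they form a regular sequence in \(R\).  Because \(A\) is free, hence flat, over \(R\), a standard fact about flat extensions transfers regularity: the same sequence is regular in \(A\), provided the quotient is nonzero.  This nonvanishing is automatic, since \(A/\ideal a \cong A\otimes_R \Z\) is a free \(\Z\)-module whose rank equals the \(R\)-rank of \(A\).  Together with the assumption that \(h^*(A)\) is trivial, all three bulleted hypotheses of \autoref{cor:h-regular-sequence-multiplicative} are now in place, and the conclusion—together with the asserted form of the representatives \(u_i\)—follows immediately.

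The substantive step here is really the flatness argument transferring regularity from \(R\) to \(A\); once that is in place, everything else is bookkeeping.  Since \(A\) is finite free over \(R\) by hypothesis (an incarnation of Steinberg's theorem in the intended application), no difficulty arises, and the corollary follows directly from the machinery already developed in this section.
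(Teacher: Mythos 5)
Your proposal is correct and takes essentially the same route as the paper: identify $\ideal a$ as the $\star$-ideal generated by the reduced polynomial generators, observe that these form a regular sequence in $R$ and hence (by freeness/flatness of $A$ over $R$) in $A$, and then invoke \autoref{cor:h-regular-sequence-multiplicative}. The paper's proof is just a terser version of the same argument.
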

\begin{proof}
The ideal \(\ideal a\) may be generated by the reduced generators \(\rlambda,\rlambda_1^*\), \dots, \(\rlambda_k,\rlambda_k^*\), \(\rmu_{k+1}\), \dots, \(\rmu_{k+l}\) of \(R\). These generators form a regular sequence in \(R\). Since \(A\) is free of finite rank over \(R\), they also form a regular sequence in \(A\). So we may apply \autoref{cor:h-regular-sequence-multiplicative}.
\end{proof}

\vfill
\subsection*{Acknowledgements}
I am greatly indebted to Ian Grojnowski for initiating my search for a representation-theoretic description of the Witt rings of flag varieties, and for pointing me to the papers of Bousfield cited. In November 2010, he outlined a computational approach to me that differs from the one presented here; I have learned since that it lead him to identical results shortly afterwards. Nobuaki Yagita has been highly helpful in communicating the results of \cite{KO:ExceptionalFlags} and sending me an early version of \cite{Yagita:W-of-G}, and I am grateful to him for explaining some of his arguments. I also gratefully acknowledge support from the Max-Planck-Institute for Mathematics in Bonn. Substantial parts of this paper were worked out during the time I spent there in spring 2012. Finally, I thank Jeremiah Heller for first aid in times of confusion, Baptiste Calm{\`e}s for many interesting discussions and Jens Hornbostel for useful comments on an earlier version of this paper.
\newpage

\begin{bibdiv}
\begin{biblist}
\bib{Adams:Lie}{book}{
  author={Adams, John Frank},
  title={Lectures on Lie groups},
  publisher={W. A. Benjamin, Inc., New York-Amsterdam},
  date={1969},
}
\bib{AsteyEtAl}{article}{
  author={Astey, L.}, 
  author={Bahri, A.},
  author={Bendersky, M.},
  author={Cohen, F. R.},
  author={Davis, D. M.},
  author={Gitler, S.},
  author={Mahowald, M.},
  author={Ray, N.},
  author={Wood, R.},
  title={The $\mathop{KO}^{\ast}$-rings of $\mathop{BT}^m$, the Davis-Januszkiewicz spaces and certain toric manifolds},
  date={2013-7-17},
  journal={J. Pure Appl. Algebra},
  doi={10.1016/j.jpaa.2013.06.001},
}
\bib{AtiyahHirzebruch:VectorBundles}{article}{
  author={Atiyah, Michael F.},
  author={Hirzebruch, Friedrich},
  title={Vector bundles and homogeneous spaces},
  conference={
    title={Proc. Sympos. Pure Math., Vol. III},
  },
  book={
    publisher={American Mathematical Society},
    place={Providence, R.I.},
  },
  date={1961},
  pages={7--38},
}
\bib{Balmer:TWGI}{article}{
  author={Balmer, Paul},
  title={Triangular Witt groups. I. The 12-term localization exact sequence},
  journal={$K$-Theory},
  volume={19},
  date={2000},
  number={4},
  pages={311--363},
}
\bib{BalmerCalmes:Gr}{article}{
   author={Balmer, Paul},
   author={Calm{\`e}s, Baptiste},
   title={Witt groups of Grassmann varieties},
   journal={J. Algebraic Geom.},
   volume={21},
   date={2012},
   number={4},
   pages={601--642},
}
\bib{Bourbaki:Lie456}{book}{
  author={Bourbaki, Nicolas},
  title={Lie groups and Lie algebras. Chapters 4--6},
  series={Elements of Mathematics (Berlin)},
  note={Translated from the 1968 French original by Andrew Pressley},
  publisher={Springer-Verlag},
  place={Berlin},
  date={2002},
}
\bib{Bousfield:K-local-spectra}{article}{
   author={Bousfield, Aldridge K.},
   title={A classification of $K$-local spectra},
   journal={J. Pure Appl. Algebra},
   volume={66},
   date={1990},
   number={2},
   pages={121--163},
}
\bib{Bousfield:2-primary}{article}{
  author={Bousfield, Aldridge K.},
  title={On the 2-primary $v_1$-periodic homotopy groups of spaces},
  journal={Topology},
  volume={44},
  date={2005},
  number={2},
  pages={381--413},
}
\bib{BrunerGreenlees:ko}{book}{
  author={Bruner, Robert R.},
  author={Greenlees, John P. C.},
  title={Connective real $K$-theory of finite groups},
  series={Mathematical Surveys and Monographs},
  volume={169},
  publisher={American Mathematical Society},
  place={Providence, RI},
  date={2010},
}
\bib{Davis:Rep}{article}{
  author={Davis, Donald M.},
  title={Representation types and 2-primary homotopy groups of certain compact Lie groups},
  journal={Homology Homotopy Appl.},
  volume={5},
  date={2003},
  number={1},
  pages={297--324},
}
\bib{Fujii:P}{article}{
  author={Fujii, Michikazu},
  title={$K\sb{0}$-groups of projective spaces},
  journal={Osaka J. Math.},
  volume={4},
  date={1967},
  pages={141--149},
}
\bib{Hodgkin}{article}{
  author={Hodgkin, Luke},
  title={The equivariant K\"unneth theorem in $K$-theory},
  conference={
    title={Topics in $K$-theory. Two independent contributions},
  },
  book={
    publisher={Springer},
    place={Berlin},
  },
  date={1975},
  pages={1--101. Lecture Notes in Math., Vol. 496},
}
\bib{Hoggar}{article}{
  author={Hoggar, Stuart},
  title={On ${\rm KO}$ theory of Grassmannians},
  journal={Quart. J. Math. Oxford Ser. (2)},
  volume={20},
  date={1969},
  pages={447--463},
}
\bib{KKO:Flags}{article}{
  author={Kishimoto, Daisuke},
  author={Kono, Akira},
  author={Ohsita, Akihiro},
  title={KO-theory of flag manifolds},
  journal={J. Math. Kyoto Univ.},
  volume={44},
  date={2004},
  number={1},
  pages={217--227},
}
\bib{KO:ExceptionalFlags}{article}{
   author={Kishimoto, Daisuke},
   author={Ohsita, Akihiro},
   title={$KO$-theory of exceptional flag manifolds},
   journal={Kyoto J. Math.},
   volume={53},
   date={2013},
   number={3},
   pages={673--692},
}
\bib{KonoHara:HSS}{article}{
  author={Kono, Akira},
  author={Hara, Shin-ichiro},
  title={KO-theory of Hermitian symmetric spaces},
  journal={Hokkaido Math. J.},
  volume={21},
  date={1992},
  number={1},
  pages={103--116},
}
\bib{KonoHara:Gr}{article}{
  author={Kono, Akira},
  author={Hara, Shin-ichiro},
  title={$K{\rm O}$-theory of complex Grassmannians},
  journal={J. Math. Kyoto Univ.},
  volume={31},
  date={1991},
  number={3},
  pages={827--833},
}
\bib{May:Alaska}{book}{
  author={May, Jon Peter},
  title={Equivariant homotopy and cohomology theory},
  series={CBMS Regional Conference Series in Mathematics},
  volume={91},
  note={With contributions by M. Cole, G. Comeza^^f1a, S. Costenoble,
    A. D. Elmendorf, J. P. C. Greenlees, L. G. Lewis, Jr., R. J. Piacenza, G.
    Triantafillou, and S. Waner},
  publisher={Published for the Conference Board of the Mathematical
    Sciences, Washington, DC},
  date={1996},
}
\bib{Nenashev:Q}{article}{
  author={Nenashev, Alexander},
  title={On the Witt groups of projective bundles and split quadrics:
    geometric reasoning},
  journal={J. K-Theory},
  volume={3},
  date={2009},
  number={3},
  pages={533--546},
  issn={1865-2433},
}
\bib{Panin:TwistedFlags}{article}{
   author={Panin, Ivan A.},
   title={On the algebraic $K$-theory of twisted flag varieties},
   journal={$K$-Theory},
   volume={8},
   date={1994},
   number={6},
   pages={541--585},
}
\bib{Pittie}{article}{
  author={Pittie, Harsh V.},
  title={Homogeneous vector bundles on homogeneous spaces},
  journal={Topology},
  volume={11},
  date={1972},
  pages={199--203},
}
\bib{Segal:EquivariantK}{article}{
  author={Segal, Graeme},
  title={Equivariant $K$-theory},
  journal={Inst. Hautes \'Etudes Sci. Publ. Math.},
  number={34},
  date={1968},
  pages={129--151},
}
\bib{Serre:G/P}{article}{
  author={Serre, Jean-Pierre},
  title={Repr\'esentations lin\'eaires et espaces homog\`enes k\"ahl\'eriens des groupes de Lie compacts (d'apr\`es Armand Borel et Andr\'e Weil)},
  conference={
    title={S\'eminaire Bourbaki, Vol.\ 2},
  },
  book={
    publisher={Soc. Math. France},
    place={Paris},
  },
  date={1995},
}
\bib{Steinberg:Pittie}{article}{
  author={Steinberg, Robert},
  title={On a theorem of Pittie},
  journal={Topology},
  volume={14},
  date={1975},
  pages={173--177},
}
\bib{Walter:TGW}{article}{
  author={Walter, Charles},
  title={Grothendieck-Witt groups of triangulated categories},
  eprint={www.math.uiuc.edu/K-theory/0643/},
  date={2003},
}
\bib{Walter:PB}{article}{
  author={Walter, Charles},
  title={Grothendieck-Witt groups of projective bundles},
  eprint={www.math.uiuc.edu/K-theory/0644/},
  date={2003},
}
\bib{Yagita:W-of-G}{article}{
  author={Yagita, Nobuaki},
  title={Witt groups of algebraic groups},
  eprint={www.mathematik.uni-bielefeld.de/LAG/man/430.html},
  date={2011},
}
\bib{Me:WCCV}{article}{
  author={Zibrowius, Marcus},
  title={Witt groups of complex cellular varieties},
  journal={Documenta Math.},
  number={16},
  date={2011},
  pages={465--511},
}
\bib{Me:Vanishing}{article}{
   author={Zibrowius, Marcus},
   title={Twisted Witt groups of flag varieties},
   journal={J. K-Theory},
   volume={14},
   date={2014},
   number={1},
   pages={139--184},
}
\end{biblist}
\end{bibdiv}
\end{document}